\documentclass[a4paper,11pt]{article}

\pagestyle{plain}                                                      
\setlength{\textwidth}{6.5in}     
\setlength{\oddsidemargin}{0in}   
\setlength{\evensidemargin}{0in}  
\setlength{\textheight}{8.5in}    
\setlength{\topmargin}{0in}       
\setlength{\headheight}{0in}      
\setlength{\headsep}{0in}         
\setlength{\footskip}{.5in}

\usepackage{relsize}
\usepackage[latin1]{inputenc}
\usepackage[T1]{fontenc}
\usepackage[normalem]{ulem}
\usepackage[english]{babel}
\usepackage{verbatim}
\usepackage{graphicx}
\usepackage{enumerate}
\usepackage{amsmath,amssymb,amsfonts,amsthm}
\usepackage{rotating}
\usepackage{float}
\usepackage{array}

\newtheorem{theorem}{Theorem}[section]
\newtheorem*{theoremBI}{Theorem $B^I$}
\newtheorem*{theoremBn}{Theorem $B^n$}
\newtheorem*{theoremBIG}{Theorem $B_{G}^I$}
\newtheorem{lemma}[theorem]{Lemma}

\newtheorem*{cor*}{Corollary}

\theoremstyle{definition}
\newtheorem*{defn*}{Definition}

\newtheorem{rem}[theorem]{Remark}
\newtheorem{ex}[theorem]{Example}

\input xy
\xyoption{all}

\DeclareMathOperator{\id}{id}

\DeclareMathOperator*{\colim}{colim}

\DeclareMathOperator*{\holim}{holim}

\DeclareMathOperator{\hofib}{hof}

\DeclareMathOperator{\hof}{hof}

\DeclareMathOperator{\Hom}{Hom}

\newcommand{\unders}{\!<\!}

\begin{document}
\begin{center}\LARGE{Finite Homotopy Limits of Nerves of Categories}
\end{center}

\begin{center}\large{Emanuele Dotto}

\end{center}
\vspace{.3cm}

\begin{quote}
\textsc{Abstract}. Let $I$ be a small category with finite dimensional nerve, and $X\colon I\to Cat$ a diagram of small categories.
We show that, under a ``Reedy quasi-fibrancy condition'', the homotopy limit of the geometric realization  of $X$ is itself the geometric realization of a category. This categorical model for the homotopy limit is defined explicitly, as a category of natural transformations of diagrams. For the poset $\bullet\to\bullet\leftarrow\bullet$ we recover  the model for homotopy pullbacks provided by Quillen's Theorem $B$ (specifically Barwick and Kan's version of Quillen's Theorem $B_2$). For diagrams of cubical shape, this theorem gives a criterion to determine when the nerve of a cube of categories is homotopy cartesian.

We further generalize this result to equivariant diagrams of categories. For a finite group $G$,  we show that when $X\colon I\to Cat$ has a $G$-structure in the sense of \cite{Jack} and \cite{Gdiags}, the realization of the category constructed above is weakly $G$-equivalent to the homotopy limit of the realization of $X$. For $G$-diagrams of cubical shapes, this is an equivariant version of Quillen's Theorem $B$.
\end{quote}

\tableofcontents

\section*{Introduction}

Many spaces of interest to topologists are defined as the nerve of categories.
Given a diagram of small categories $X\colon I\to Cat$, understanding the homotopy limit and the homotopy colimit of the nerve diagram  $NX\colon I\stackrel{X}{\to} Cat\stackrel{N}{\to}sSet$ is important for calculations. It is particularly useful to find categories whose nerves are weakly equivalent to these spaces. In the case of homotopy colimits, this problem has been completely solved by Thomason in \cite{Thomason}, where the author proves that the nerve of the Grothendieck construction $I\int\! X$ is always weakly equivalent to the homotopy colimit of $NX$. The analogous problem for homotopy limits is considerably harder. In his famous theorem B of \cite{Quillen}, Quillen provides a categorical model for a very specific kind of homotopy limit. He proves that the homotopy fibers of the nerve of a functor are equivalent to the nerves of the over categories, provided the functor satisfies a certain ``quasi-fibrancy'' condition. Quillen's theorem has been improved by Barwick and Kan in \cite{ClarkKan}, where the authors construct a whole family of categorical models for the homotopy pullback of two functors, under assumptions similar to Quillen's. The present paper extends these results, by constructing a categorical model for the homotopy limit of $NX$ when the indexing category $I$ has finite dimensional nerve, and $X$ satisfies a similar ``quasi-fibrancy'' condition. For cubical diagrams, this theorem gives a categorical criterion to determine if the nerve of a cube of categories is homotopy cartesian.
\vspace{.4cm}

Given two diagrams of categories  $Y,X\colon I\to Cat$ let $\Hom(Y,X)$ be the category of natural transformations from $Y$ to $X$. Its objects are natural transformations $\Phi\colon Y\to X$, and a morphism $\Lambda\colon \Phi\to \Phi'$ is a pair of natural transformations $\Lambda_1\colon Y\to Y$ and $\Lambda_2\colon X\to X$ satisfying $\Phi'\Lambda_1=\Lambda_2\Phi$.
This category is introduced in \cite{Lydakis} where the author shows, among other homotopical properties of this construction, that its nerve is isomorphic to the simplicial mapping space of natural transformations $\Hom(NY,NX)$.
For the functor $Y=I/_{(-)}\colon I\to Cat$ that sends an object $i$ to its over category $I/_i$, the nerve of $\Hom\big(I/_{(-)},X\big)$ is then isomorphic to the Bousfield-Kan formula (\cite{BK}) for the homotopy limit of $NX$. General simplicial model category theory tells us that the Bousfield-Kan formula $\Hom\big(NI/_{(-)},NX\big)$ is homotopy invariant when $NX$ is pointwise fibrant. This happens only in the rare situation when the $X_i$'s are groupoids. In the terminology used in this paper, the ``homotopy limit'' of $NX$ is the specific model
\[\holim_INX:=\Hom\big(NI/_{(-)},FNX\big)\]
for a left derived functor of the limit functor, where $NX\stackrel{\simeq}{\to}FNX$ is a pointwise fibrant replacement of $NX$ (the vertices $FNX_i$ are Kan complexes).
In our main result we find a weaker condition on $X$, similar in spirit to the condition of Quillen's theorem $B$, for which the nerve of $\Hom\big(I/_{(-)},X\big)$ is still weakly equivalent to $\holim_INX$.

Let $i\!<\!I$ be the full subcategory of the under category $i/I$ of non-identity maps with source $i$. For a diagram $X\colon I\to Cat$ we define $X_{i<}: (i\unders I)\to Cat$ as the restriction of $X$ along the projection functor $i\unders I\to I$ that sends $i\to j$ to $j$. For every object $i$ of $I$, there is a functor 
\[m_i\colon X_i\longrightarrow \Hom\big((i\unders I)/_{(-)},X_{i<}\big)\]
that sends an object $x$ of $X_i$ to the natural transformation $m_i(x)\colon (i\unders I)/_{(-)}\to X_{i<}$ consisting of the constant functors $m_i(x)_\alpha\colon (i\unders I)/_{\alpha}\to X_{j}$ that send every object to $\alpha_\ast x$. For the purpose of this paper, we say that a functor is a weak equivalence if its nerve is a weak equivalence of simplicial sets.
\begin{defn*}
A diagram $X\colon I\to Cat$ is Reedy quasi-fibrant if for every object $i$ of $I$ the functor
\[m_i/_{(-)}\colon \Hom\big((i\unders I)/_{(-)},X_{i<}\big)\longrightarrow Cat\]
sends every morphism in the category of natural transformations to a weak equivalence.
\end{defn*}
Now suppose that the nerves of the under categories $N(i/I)$ have finite dimension for every object $i$ of $I$. We call a category $I$ with this property left-finite. These categories have a canonical degree function $Ob I\to\mathbb{N}$ that sends $i$ to the dimension of $N(i/I)$. The degree function induces a filtration $I_{\leq 0}\subset I_{\leq 1}\subset \dots \subset I$, where $I_{\leq n}$ is the full subcategory of $I$ of objects of degree less than or equal to $n$. This filtration is finite precisely when $NI$ is itself finite dimensional.

\begin{theoremBI}
Let $I$ be a left-finite category, and let $X\colon I\to Cat$ be a Reedy quasi-fibrant diagram of categories. There is a weak equivalence
\[\holim_{n\in \mathbb{N}^{op}}N\Hom\big((I_{\leq n})/_{(-)},X_{\leq n}\big)\stackrel{\simeq}{\longrightarrow}\holim_I NX\]
where $X_{\leq n}$ is the restriction of $X$ to $I_{\leq n}$.
In particular if the nerve of $I$ is finite dimensional the map
\[N\Hom\big(I/_{(-)},X\big)\stackrel{\simeq}{\longrightarrow}\holim_I NX\]
is a weak equivalence.
\end{theoremBI}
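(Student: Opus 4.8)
\section*{Proof proposal}

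The plan is to prove the finite-dimensional statement first, by induction on the top degree of $NI$, and then to bootstrap it to the general left-finite case. For the bootstrap, I would use that $I=\colim_n I_{\leq n}$ is an increasing union of full subcategories and that the Bousfield--Kan homotopy limit carries this colimit of indexing categories to the homotopy limit of the associated restriction tower, giving
\[
\holim_I NX \;\simeq\; \holim_{n\in\mathbb{N}^{op}}\holim_{I_{\leq n}}NX_{\leq n}
\]
(a Fubini/cofinality argument, with no $\lim^1$ obstruction since the right-hand side is a derived limit). Each $I_{\leq n}$ has finite-dimensional nerve, so the finite-dimensional case supplies, compatibly in $n$, weak equivalences $N\Hom\big((I_{\leq n})/_{(-)},X_{\leq n}\big)\xrightarrow{\simeq}\holim_{I_{\leq n}}NX_{\leq n}$; since $\holim_{\mathbb{N}^{op}}$ preserves objectwise weak equivalences of towers, applying it yields the first displayed equivalence. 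When $NI$ is finite-dimensional the tower stabilizes and this recovers the ``in particular'' statement directly.

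The inductive proof of the finite-dimensional case rests on one structural fact: every non-identity morphism strictly lowers degree, i.e. $\deg j<\deg i$ whenever $i\to j$ is a non-identity map (a non-identity arrow out of $i$ lets one lengthen chains in $N(i/I)$ beyond those pulled back from $N(j/I)$ via the initial object $\id_i$). Consequently the diagram $X_{i<}$ over $i\unders I$ involves only objects of degree $<\deg i$, and $i\unders I$ is itself left-finite of top degree $<\deg i$ and inherits Reedy quasi-fibrancy from $X$. Thus enlarging the index from $I_{\leq n-1}$ to $I_{\leq n}$ amounts to adjoining the objects of degree exactly $n$, each of which maps only into the already-treated subcategory $I_{\leq n-1}$.

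For the inductive step I would produce two homotopy-pullback squares and compare them. On the homotopy-limit side, the Reedy matching square for this inverse-type degree filtration is a homotopy pullback
\[
\holim_{I_{\leq n}}NX \;\simeq\; \Big(\prod_{\deg i=n}NX_i\Big)\times^{h}_{\prod_{\deg i=n}\holim_{i\unders I}NX_{i<}}\holim_{I_{\leq n-1}}NX_{\leq n-1},
\]
the matching map being $\prod_i N(m_i)$ and the other leg being restriction. On the categorical side, a direct inspection of natural transformations out of the over-category diagram (generalizing the cospan computation that recovers Barwick and Kan's model for $\bullet\to\bullet\leftarrow\bullet$) identifies $\Hom\big((I_{\leq n})/_{(-)},X_{\leq n}\big)$ with the \emph{comma} category built from $\prod_{\deg i=n}X_i \xrightarrow{\prod m_i}\prod_{\deg i=n}\Hom\big((i\unders I)/_{(-)},X_{i<}\big)$ and the restriction functor out of $\Hom\big((I_{\leq n-1})/_{(-)},X_{\leq n-1}\big)$: the data adjoined in passing from $I_{\leq n-1}$ to $I_{\leq n}$ is exactly an object $x_i\in X_i$ together with the comparison morphism recording how $m_i(x_i)$ maps into the restricted lower transformation. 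This is where the hypothesis is consumed: by Reedy quasi-fibrancy each $m_i$, hence $\prod_i m_i$, satisfies the hypothesis of Quillen's Theorem $B$, so Barwick and Kan's Theorem $B_2$ (\cite{ClarkKan}) identifies the nerve of this comma category with the homotopy pullback of the nerves. Comparing the two squares, the map is the identity on $\prod NX_i$ and, by the inductive hypothesis applied to $I_{\leq n-1}$ and to each $i\unders I$ (both of top degree $<n$), a weak equivalence on the remaining corners; since both squares are homotopy pullbacks, the induced map on total objects is a weak equivalence, completing the induction.

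I expect the main obstacle to be the categorical identification of $\Hom\big((I_{\leq n})/_{(-)},X_{\leq n}\big)$ as this comma construction and the verification that its nerve computes the homotopy pullback of nerves. The subtlety is that naively restricting a transformation from $I_{\leq n}$ to $I_{\leq n-1}$ destroys the comparison morphisms created by the maps out of the degree-$n$ objects, so the construction is a genuinely lax (comma) pullback rather than a strict one; it is only the Theorem $B$ condition furnished by Reedy quasi-fibrancy that forces the comma category's nerve to model the homotopy pullback. Arranging the homotopy-limit matching square to be compatible with the comma construction uniformly across the product over all degree-$n$ objects is the delicate bookkeeping the proof must carry out; the remaining ingredients---the Fubini reduction, the degree-lowering fact, and the inheritance of Reedy quasi-fibrancy by the $X_{i<}$---are structural and comparatively routine.
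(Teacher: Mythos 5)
Your proposal is correct and follows essentially the same route as the paper: induction over the degree filtration, identification of $\Hom\big((I_{\leq n})/_{(-)},X_{\leq n}\big)$ as a lax pullback over the previous stage (your comma category over the restriction functor is exactly the Grothendieck construction of $F_U$ in the paper's Lemma 1.1), and Quillen's Theorem B/Thomason to promote it to a homotopy pullback matching the Reedy decomposition of the homotopy limit. The remaining differences are presentational: you compare homotopy pullback squares where the paper compares fiber sequences over a fixed $\Phi$, and you induct on the dimension of $NI$ rather than on subsets $U\subset I_n$ of fixed degree.
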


Let us justify the terminology ``Reedy quasi-fibrancy'' used for $X$. The functor $m_i$ factors through the categorical limit
\[m_i\colon X_i\longrightarrow\lim\limits_{i\stackrel{\neq\id}{\to}j}X_j=\Hom(\ast,X_{i<})\longrightarrow \Hom\big((i\unders I)/_{(-)},X_{i<}\big)\]
where the second map is induced by the projection $(i\unders I)/_{(-)}\to\ast$, and the first functor is the $i$-matching functor. The nerves of the $m_i$'s are thickenings of the matching maps of $NX$.
The diagram $NX$ would be Reedy fibrant if the matching maps were Kan fibrations. The condition on $X$ of Theorem $B^I$ implies, by Quillen's Lemma \cite[p.98]{Quillen} and Thomason's theorem \cite{Thomason}, that the replacement of $m_i$ by the Grothendieck construction
\[NX_i\simeq N\big(\Hom\mbox{$\int$} m_i/_{(-)}\big)\longrightarrow N\Hom\big((i\unders I)/_{(-)},X_{i<}\big)\]
is a quasi-fibration. In this sense the condition of Theorem $B^I$ is a Reedy quasi-fibrancy condition.

Theorem $B^I$ gives a criterion to determine if the nerve of a cube of categories is homotopy cartesian. Let $\mathcal{P}(n\!+\!1)$ be the poset category of subsets of the set $\{1,\dots,n\!+\!1\}$ ordered by inclusion, and let $\mathcal{P}_0(n\!+\!1)$ be the subposet of non-empty subsets. A functor $X\colon \mathcal{P}(n\!+\!1)\to Cat$ is an $(n\!+\!1)$-cube of categories. The quasi-matching functor for $X$ at the empty vertex is the functor $m_{\emptyset}\colon X_{\emptyset}\to \Hom\big(\mathcal{P}_0(-),X_{\emptyset<}\big)$.

\begin{cor*}
Let $X\colon \mathcal{P}(n\!+\!1)\to Cat$ be Reedy quasi-fibrant. For every natural transformation $\Phi\colon \mathcal{P}_0(-)\to X_{\emptyset<}$ the nerve of the category $m_{\emptyset}/_{\Phi}$ is the total homotopy fiber of the cube $NX$ over $\Phi$. In particular if all the categories $m_{\emptyset}/_{\Phi}$ are contractible, $NX$ is homotopy cartesian.
\end{cor*}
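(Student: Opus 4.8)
The plan is to recognize $Nm_\emptyset$ as a model for the canonical comparison map from the initial vertex to the homotopy limit of the punctured cube, and then to read off $N(m_\emptyset/_\Phi)$ as its homotopy fiber via Quillen's Theorem $B$, identifying that with the total homotopy fiber.

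First I would unwind the indexing categories. Since $\emptyset$ is initial in $\mathcal{P}(n\!+\!1)$, the category $\emptyset\unders\mathcal{P}(n\!+\!1)$ of non-identity maps out of $\emptyset$ is canonically isomorphic to the punctured cube $\mathcal{P}_0(n\!+\!1)$, and under this isomorphism $X_{\emptyset<}$ is the restriction of $X$ to $\mathcal{P}_0(n\!+\!1)$. Both $\mathcal{P}(n\!+\!1)$ and $\mathcal{P}_0(n\!+\!1)$ have finite-dimensional nerves, and the Reedy quasi-fibrancy of $X$ restricts to Reedy quasi-fibrancy of $X_{\emptyset<}$ over $\mathcal{P}_0(n\!+\!1)$, since the matching functor at a non-empty $S$ is computed over the same $T\supsetneq S$ (all non-empty) whether one works in $\mathcal{P}(n\!+\!1)$ or in $\mathcal{P}_0(n\!+\!1)$. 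Hence Theorem $B^I$ applies to $X_{\emptyset<}$ and gives a weak equivalence
\[N\Hom\big(\mathcal{P}_0(-),X_{\emptyset<}\big)\stackrel{\simeq}{\longrightarrow}\holim_{\mathcal{P}_0(n+1)}NX.\]

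Next I would invoke Quillen's Theorem $B$ for the functor $m_\emptyset$. The Reedy quasi-fibrancy condition on $X$ at the object $\emptyset$ says exactly that $m_\emptyset/_{(-)}$ carries every morphism of $\Hom(\mathcal{P}_0(-),X_{\emptyset<})$ to a weak equivalence, which is the hypothesis of Theorem $B$. As recalled in the paragraph preceding the statement, this makes the replacement of $m_\emptyset$ by its Grothendieck construction a quasi-fibration whose total space is weakly equivalent to $NX_\emptyset$; consequently, for every vertex $\Phi$ the nerve $N(m_\emptyset/_\Phi)$ is the homotopy fiber of $Nm_\emptyset$ over $\Phi$.

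It remains to identify $Nm_\emptyset$, up to the equivalence above, with the canonical map $NX_\emptyset\to\holim_{\mathcal{P}_0}NX$ whose homotopy fiber over $\Phi$ is by definition the total homotopy fiber of the cube. Here I would use the factorization recorded in the text,
\[m_\emptyset\colon X_\emptyset\longrightarrow\lim\limits_{\emptyset\stackrel{\neq\id}{\to}j}X_j=\Hom(\ast,X_{\emptyset<})\longrightarrow \Hom\big(\mathcal{P}_0(-),X_{\emptyset<}\big),\]
in which the first map is the $\emptyset$-matching functor into the strict limit of the punctured cube and the second is induced by $\mathcal{P}_0(-)\to\ast$. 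Taking nerves, this is exactly the standard factorization of the Bousfield--Kan comparison map, the strict limit followed by the map comparing it to the homotopy limit, so under the Theorem $B^I$ equivalence the composite agrees with the canonical apex map $NX_\emptyset\to\holim_{\mathcal{P}_0}NX$. Its homotopy fiber over $\Phi$ is therefore the total homotopy fiber of $NX$ over $\Phi$, proving the first assertion. For the ``in particular'' I would argue that, the categories $m_\emptyset/_\Phi$ being contractible (hence nonempty) for every vertex $\Phi$, and every component of $N\Hom(\mathcal{P}_0(-),X_{\emptyset<})$ containing such a vertex, all homotopy fibers of $Nm_\emptyset$ are contractible; so $Nm_\emptyset$, and thus the canonical map $NX_\emptyset\to\holim_{\mathcal{P}_0}NX$, is a weak equivalence, i.e. $NX$ is homotopy cartesian.

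I expect the main obstacle to be the third paragraph: verifying that the Theorem $B^I$ weak equivalence is compatible with the two descriptions of the comparison map, i.e. that the square relating $Nm_\emptyset$ to the canonical apex map commutes up to coherent homotopy, and checking that the vertices $\Phi$ detect the homotopy fibers over every component of $\holim_{\mathcal{P}_0}NX$. The bookkeeping required to match the categorical Bousfield--Kan construction $\Hom(\mathcal{P}_0(-),X_{\emptyset<})$ with the simplicial one defining $\holim_{\mathcal{P}_0}NX$, naturally in the structure maps of the cube, is where the care lies.
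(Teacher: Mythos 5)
Your proposal is correct and follows essentially the same route as the paper: restrict to the punctured cube, note that Reedy quasi-fibrancy is inherited, apply Theorem $B^I$ to replace $\holim_{\mathcal{P}_0(n+1)}NX_{\emptyset<}$ by $N\Hom(\mathcal{P}_0(-),X_{\emptyset<})$, and then apply Quillen's Theorem $B$ to $m_\emptyset$ to identify the homotopy fiber over $\Phi$ with $N(m_\emptyset/_\Phi)$. The compatibility of $Nm_\emptyset$ with the canonical apex map, which you flag as the main obstacle, is exactly the factorization through the strict limit already recorded in the introduction and is treated as immediate in the paper.
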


We explain how Theorem $B^I$ relates to Quillen's Theorem B when $I$ is the poset $\bullet\to\bullet\leftarrow\bullet$. A diagram indexed over this poset is a pullback diagram of categories $C\stackrel{f}{\to} D\stackrel{g}{\leftarrow} E$. There is an isomorphism of categories  
\[\Hom\Big((\bullet\to\bullet\leftarrow\bullet)/_{(-)},C\stackrel{f}{\to} D\stackrel{g}{\leftarrow} E\Big)\cong f\!\downarrow\! g\]
where $f\!\!\downarrow\!\!g$ is the model for the homotopy pullback of Barwick and Kan \cite{ClarkKan}. The objects of $f\!\!\downarrow\!\!g$ are triples $(c,d,\gamma)$ consisting of objects $c\in C$ and $e\in E$, and a zig-zag of morphisms $\gamma=(f(c)\to d \leftarrow g(e))$ in the category $D$. This is precisely the Grothendieck construction of the functor 
\[f/_{(-)}\times g/_{(-)}\colon D\longrightarrow Cat\]
In \cite{ClarkKan} the authors prove that when one of the functors $f/_{(-)},g/_{(-)}\colon D\to Cat$ sends every morphism to an equivalence, the nerve of $f\!\downarrow\!g$ is equivalent to the homotopy pullback of the nerves. In fact, they even show that the the zig-zag in $D$ can be chosen of any given length. In particular when it has length one and $E$ is the point category, this is the classical theorem B of Quillen \cite{Quillen}.
The condition from theorem Theorem $B^I$ specified to pullback diagrams requires that both functors $f/_{(-)}$ and $g/_{(-)}$ send morphisms to weak equivalences. This is clearly more restrictive than the theorem of \cite{ClarkKan}, where only one of the two functors needs to satisfy this condition. In section \ref{higherpb} we give an alternative proof of Theorem $B^I$ for the poset $I=\mathcal{P}_0(n\!+\!1)$. This new proof requires $2^{n+1}-n-2$ conditions, against the $2^{n+1}-2$ conditions of Theorem $B^I$ for the category $\mathcal{P}_0(n\!+\!1)$. In particular for $n=1$ the unique condition is precisely the condition of \cite{ClarkKan}.
\vspace{.4cm}

Theorem $B^I$ is proved by induction on the filtration on $I$ induced by the degree function, and the argument can be carried out equivariantly.
Let $G$ be a finite group acting on a category $I$. Following \cite{Jack}, a $G$-diagram in a category $\mathcal{C}$ is a functor $Z\colon I\to \mathcal{C}$ together with natural transformations 
\[g_Z\colon Z\to Z\circ g\] for every group element $g$ in $G$, satisfying the conditions $1_Z=\id$ and $g_Z\circ h_Z=(gh)_Z$. Given two $G$-diagrams of simplicial sets $Z,L\colon I\to sSet$, the simplicial set of natural transformations of underlying diagrams $\Hom(L,Z)$ inherits a $G$-action by conjugation. In particular for a suitable pointwise fibrant replacement $Z\stackrel{\simeq}{\to} FZ$, the Bousfield-Kan homotopy limit $\holim_I Z=\Hom\big(NI/_{(-)},FZ\big)$ has a $G$-action. The homotopical properties of this construction are studied extensively in \cite{Gdiags} in a suitable model categorical context.
Given two $G$-diagrams of categories $X,Y\colon I\to Cat$,
the category of natural transformations $\Hom(Y,X)$ has a similar $G$-action defined by conjugation, and the canonical isomorphism $N\Hom(Y,X)\cong \Hom(NY,NX)$ is equivariant.

Now suppose that $I$ is left-finite. Notice that the fixed point categories $I^H$ are automatically left-finite for every subgroup $H$ of $G$. For every object $i$ of $I^H$, the under category $i/I$ has an action of $H$, that restricts to the subcategory $i\unders I$.
The restriction of a $G$-diagram $X\colon I\to Cat$ to $i\unders I$ has a canonical structure of $H$-diagram, and the functor $m_i\colon X_i\rightarrow \Hom\big((i\unders I)/_{(-)},X_{i<}\big)$ is $H$-equivariant. Let
\[m^{H}_i\colon X^{H}_i\longrightarrow \Hom\big((i\unders I)/_{(-)},X_{i<}\big)^{H}\] be its restriction to the fixed point categories.

\begin{theoremBIG}
Let $I$ be a left finite category with $G$-action, and $X\colon I\to Cat$ a $G$-diagram of categories. If for every subgroup $H$ of $G$ and every object $i$ of $I^H$ the functor $m^{H}_i/_{(-)}$
sends every morphism to a weak equivalence of categories, there is a weak $G$-equivalence
\[\holim_{n\in \mathbb{N}^{op}}N\Hom\big((I_{\leq n})/_{(-)},X_{\leq n}\big)\stackrel{\simeq}{\longrightarrow}\holim_I NX\]
In particular if the nerve of $I$ is finite dimensional the map $N\Hom\big(I/_{(-)},X\big)\stackrel{\simeq}{\longrightarrow}\holim_I NX$
is a weak $G$-equivalence of simplicial $G$-sets.
\end{theoremBIG}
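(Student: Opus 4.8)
The plan is to deduce Theorem $B_G^I$ from the non-equivariant Theorem $B^I$ by passing to fixed points. Recall that a map of simplicial $G$-sets is a weak $G$-equivalence precisely when the induced map on $H$-fixed points is a weak equivalence of simplicial sets for every subgroup $H\leq G$. The comparison map in the statement is $G$-equivariant: it is assembled from the same natural constructions as the map of Theorem $B^I$, and the $G$-structure on $X$ makes every functor and natural transformation involved equivariant, so the equivariant isomorphism $N\Hom(Y,X)\cong\Hom(NY,NX)$ exhibits it as a map of simplicial $G$-sets. It therefore suffices to prove that for each $H\leq G$ the map on $H$-fixed points is a weak equivalence.

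First I would identify the $H$-fixed points of both sides with non-equivariant constructions over the fixed-point category $I^H$, applied to the fixed-point diagram $X^H\colon I^H\to Cat$ with $(X^H)_i=(X_i)^H$, where the $H$-action on $X_i$ is induced by the structure maps $h_Z$ at the $H$-fixed object $i$. Three elementary compatibilities drive this: the nerve commutes with fixed points, $(NC)^H=N(C^H)$; the slice construction commutes with fixed points, $(I/_j)^H\cong I^H/_j$ for $j\in I^H$, and likewise $(i\unders I)^H\cong i\unders I^H$; and the conjugation fixed points of a category of natural transformations of $G$-diagrams are computed over the fixed-point category, the $H$-fixed natural transformations $Y\to X$ being exactly the natural transformations $Y^H\to X^H$ over $I^H$. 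Combining these with the fixed-point formula for the Bousfield--Kan homotopy limit of a $G$-diagram from \cite{Gdiags}, and using that $G$ preserves the degree filtration (an isomorphism of categories preserves $\dim N(i/I)$, hence the degree function), I obtain
\[\big(N\Hom((I_{\leq n})/_{(-)},X_{\leq n})\big)^H\cong N\Hom\big(((I_{\leq n})^H)/_{(-)},X^H|_{(I_{\leq n})^H}\big)\]
and
\[\big(\holim_I NX\big)^H\cong\holim_{I^H}N(X^H).\]
By naturality of the Theorem $B^I$ map in the pair $(I,X)$, the map on $H$-fixed points is identified with the corresponding comparison map for the diagram $X^H$ over $I^H$.

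It then remains to recognize that $X^H$ is a Reedy quasi-fibrant diagram over the (automatically left-finite) category $I^H$, and to apply Theorem $B^I$. Under the fixed-point identifications of the matching data, the matching functor of $X^H$ at $i\in I^H$ is $m_i^H\colon X_i^H\to\Hom\big((i\unders I)/_{(-)},X_{i<}\big)^H$, so the Reedy quasi-fibrancy of $X^H$ is verbatim the hypothesis that $m_i^H/_{(-)}$ carries every morphism to a weak equivalence, imposed over all $i\in I^H$. One point must be reconciled: the filtration occurring on the left-hand fixed points is $\{(I_{\leq n})^H\}_n$, the restriction to $I^H$ of the degree filtration of $I$, rather than the intrinsic degree filtration of $I^H$. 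Since a non-identity morphism in $I^H$ is a non-identity morphism in $I$, and a non-identity morphism $i\to j$ in $I$ forces $\dim N(j/I)<\dim N(i/I)$, the restricted degree is still a height function on $I^H$ along which non-identity morphisms strictly decrease, with $\bigcup_n(I_{\leq n})^H=I^H$. This is precisely the structure on which the inductive proof of Theorem $B^I$ runs, so that argument applies to $X^H$ with the filtration $\{(I_{\leq n})^H\}_n$ and yields a weak equivalence
\[\holim_n N\Hom\big(((I_{\leq n})^H)/_{(-)},X^H|_{(I_{\leq n})^H}\big)\stackrel{\simeq}{\longrightarrow}\holim_{I^H}N(X^H).\]
As this is the map on $H$-fixed points, it is a weak equivalence for every $H$, which is what we wanted.

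The hard part will be the fixed-point formula $(\holim_I NX)^H\cong\holim_{I^H}N(X^H)$ together with its tower-level refinement, since strict fixed points do not commute with homotopy limits in general. What rescues the formula is exactly the $G$-diagram structure on $X$, which supplies each $X_i$ with an $H$-action for $i\in I^H$ and makes the Bousfield--Kan cosimplicial replacement equivariant. Concretely, one must check that a pointwise-fibrant replacement $NX\to FNX$ can be chosen $G$-equivariantly so that $(FNX)^H$ restricts to a pointwise-fibrant replacement of $N(X^H)$ over $I^H$, and that the indexing over-categories match under $(I/_{(-)})^H\cong I^H/_{(-)}$; this is the content of the fixed-point analysis of equivariant homotopy limits in \cite{Gdiags}. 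Granting that input, all the remaining steps are the formal reductions above.
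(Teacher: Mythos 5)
Your strategy---reduce to Theorem $B^I$ by passing to $H$-fixed points and identifying both sides with the corresponding non-equivariant constructions over $I^H$ applied to $X^H$---breaks down at its central step. The fixed-point formulas
\[\big(N\Hom(I/_{(-)},X)\big)^H\cong N\Hom\big((I^H)/_{(-)},X^H\big),\qquad \big(\holim_I NX\big)^H\cong\holim_{I^H}N(X^H)\]
are false in general. An $H$-fixed point of $\Hom(Y,X)$ under the conjugation action is a natural transformation of \emph{underlying $I$-diagrams} $\Phi\colon Y\to X$, with a component $\Phi_i$ at \emph{every} object $i$ of $I$, subject to $\Phi_{hi}\circ (h_Y)_i=(h_X)_i\circ\Phi_i$; such data is determined by choosing, for each $H$-orbit of objects, an $H_i$-equivariant component at a representative $i$, where $H_i$ is the stabilizer. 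It is not determined by components at objects of $I^H$ alone. A minimal counterexample: let $I$ be the discrete category on the free $G$-set $G$, so $I^H=\emptyset$ for $H\neq e$ and your right-hand sides are a point, while $(\prod_{g\in G}FNX_g)^H\cong\prod_{G/H}FNX_g$ is not. The $G$-diagram structure does not rescue this; what it actually provides is the orbit/stabilizer decomposition just described, not a reduction to $I^H$.

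This matters because the paper's hypothesis and proof genuinely use stabilizer subgroups at non-$H$-fixed objects. The paper runs the same induction as Theorem $B^I$, but equivariantly: for $U\subset I_n$ it proves the comparison map is a $G_U$-equivalence, computing $H$-fixed points of the products as $\prod_{[u]\in U/H}NX_u^{H_u}$ and of the Grothendieck construction as $N\big(\Hom^H\int F_V^H\big)$ with $F_V^H(\Phi)\cong\prod_{[v]\in V/H}m_v^{H_v}/_{(\Phi|_{v<I})}$, then applies Quillen's Lemma and Thomason's theorem to $F_V^H$. The hypothesis ``for every subgroup $H$ and every $i\in I^H$'' is invoked at the pairs $(H_v,v)$ for arbitrary $v\in V$ and $H\leq G_V$ (since $v\in I^{H_v}$), which your reduction to the single subgroup $H$ and the category $I^H$ never sees. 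To repair your argument you would have to replace the fixed-point identifications with the correct orbit-indexed ones and then observe that you are forced back into re-running the induction of Theorem $B^I$ equivariantly---which is exactly the paper's proof.
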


\section{Left-finite homotopy limits of categories}

In this section we give a proof of Theorem $B^I$ from the introduction.
A left-finite category $I$ has a canonical degree function $\deg\colon Ob I\to\mathbb{N}$, that sends an object $i$ to the dimension of $N(i/I)$. This is the length of the longest sequence of non-identity morphisms starting at $i$. This function is strictly decreasing, in the sense that if $\alpha\colon i\to j$ is a non-identity morphism $\deg(i)>\deg(j)$. In particular $I$ cannot have non-trivial endomorphisms.
The degree function induces a filtration
 \[ I_{\leq 0}\subset I_{\leq 1}\subset \dots\subset I_{\leq n}\subset I_{\leq n+1}\subset\dots\subset I\]
where $I_{\leq n}$ is the subposet of $I$ of objects of degree less than or equal to $n$.
For degree reasons there cannot be a non-identity map between objects with the same degree. Hence the complements $I_n=I_{\leq n}\backslash I_{\leq n-1}$ are discrete categories, and in particular $I_{\leq 0}=I_0$ is discrete. The proof of Theorem $B^I$ is by induction on this filtration, exploiting the fact that the categories $I_n$ are discrete.

We need a Lemma to carry over the induction step, and this requires to set up some notation.
For any set of degree $n$ objects $U\subset I_n$, let $U\!\leq\! I$ be the union of the under categories $u/I$ for $u\in U$. Explicitly, its set of objects is
\[Ob\ (U\!\leq\! I)=\{(u\in U,\alpha \colon u\to i)\}\]
The set of morphisms $(u,\alpha)\to (v,\beta)$ is empty if $u$ and $v$ are different, and it is the set of morphisms $(u,\alpha)\to (u,\beta)$ in $u/I$ otherwise.
Define $U\unders I$ to be the full subcategory of $U\!\leq\! I$ of non-identity maps.
Given a diagram of categories $X\colon I\to Cat$, we denote the corresponding restrictions by
\[X_{U\leq}\colon U\!\leq\! I\to I\stackrel{X}{\to} Cat \ \ \ \ \ \ \ \ \ \ \ X_{U<}\colon U\unders I\to I\stackrel{X}{\to} Cat\] where $U\!\leq\! I\to I$ and  $U\unders I\to I$ project onto the target. For an element $u\in U$, recall that $m_u\colon X_u\to \Hom\big((u\unders I)/_{(-)},X_{u<}\big)$ is the functor that sends an object $x$ to the natural transformation consisting of constant functors $m_u(x)_\alpha=\alpha_\ast x$. Finally, we recall from \cite{Thomason} that the Grothendieck construction of a functor $F\colon K\to Cat$ is the category $K\int F$ with objects pairs $(k\in K,x\in F(k))$, and where a morphism $(k,x)\to (l,y)$ the set is a morphism $\alpha\colon k\to l$ in $K$ together with a morphism $\delta\colon \alpha_\ast x\to y$ in the category $F(l)$.

\begin{lemma}\label{indgrot}
Let $X\colon I\to Cat$ be a diagram of categories, and suppose that $I$ is left-finite. For every subset $U\subset I_n$, there is a natural isomorphism of categories
\[\Hom\big(U\!\leq\! I)/_{(-)},X_{U\leq}\big)\cong \Big(\Hom\big((U\unders I)/_{(-)},X_{U<}\big)\mbox{$\int$}F_U\Big)\]
where $F_U\colon \Hom\big((U\unders I)/_{(-)},X_{U<}\big)\to Cat$ is the functor that sends a natural transformation $\Phi$ to the category
\[F_U(\Phi)=\prod_{u\in U}(m_u)/_{(\Phi|_{u<I})}\]
\end{lemma}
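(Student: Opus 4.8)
The plan is to construct the isomorphism directly and check it is natural in $X$, after first reducing to the case of a single object. Since $U\subset I_n$ and the degree function strictly decreases along non-identity morphisms, there are no non-identity morphisms between distinct elements of $U$; hence $U\!\leq\! I=\coprod_{u\in U}(u/I)$ and $U\unders I=\coprod_{u\in U}(u\unders I)$ are coproducts of categories, with no morphisms across the summands. Consequently the over categories $(U\!\leq\! I)/_{w}$ and $(U\unders I)/_w$ only involve the summand containing $w$, so both $\Hom$-categories split as products indexed by $u\in U$, and the defining product in $F_U(\Phi)=\prod_u (m_u)/_{\Phi|_{u<I}}$ matches the splitting of the base. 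Because the Grothendieck construction of a functor that factors as a product over the summands of a product category is the product of the Grothendieck constructions, it suffices to prove the isomorphism for $U=\{u\}$, i.e.
\[\Hom\big((u/I)/_{(-)},X_{u\leq}\big)\cong \Hom\big((u\unders I)/_{(-)},X_{u<}\big)\mbox{$\int$}(m_u)/_{(-)}.\]

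The key structural observation is that $u/I$ is obtained from $u\unders I$ by adjoining the object $(u,\id_u)$ as an initial object. Indeed $(u,\id_u)$ is initial in $u/I$, it has only the identity endomorphism, and by the degree argument it admits no incoming non-identity morphism (a retraction $(u,\beta)\to(u,\id_u)$ with $\beta$ non-identity would force $\deg(u)<\deg(u)$). It follows that for every object $w=(u,\alpha\colon u\to i)$ of $u\unders I$ the over category $(u/I)/_w$ is obtained from $(u\unders I)/_w$ by adjoining an initial object, namely the morphism $\alpha\colon (u,\id_u)\to w$, while $(u/I)/_{(u,\id_u)}=\ast$. A functor out of such an ``initial-cone'' category is precisely a functor on $(u\unders I)/_w$ together with a chosen object and a cone from it; this is the mechanism that produces the comma-category data.

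Given an object $\Psi$ of the left-hand side, I would read off three pieces of data. First, the value $x:=\Psi_{(u,\id_u)}(\ast)\in X_u$ at the unique object of $(u/I)/_{(u,\id_u)}=\ast$. Second, the restrictions $\Phi_w:=\Psi_w|_{(u\unders I)/_w}$, which assemble into an object $\Phi$ of $\Hom\big((u\unders I)/_{(-)},X_{u<}\big)$. Third, for each $w=(u,\alpha)$ the value of $\Psi_w$ at the adjoined initial object together with the images of the unique morphisms out of it; naturality of $\Psi$ along $\alpha\colon(u,\id_u)\to w$ forces this value to be $\alpha_\ast x$, so these images form a cone $\const_{\alpha_\ast x}\Rightarrow\Phi_w$. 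As $w$ varies these cones are compatible, and this compatibility is exactly the condition that they constitute a morphism $\lambda\colon m_u(x)\to\Phi$ in the category of natural transformations (recall $m_u(x)_w=\const_{\alpha_\ast x}$). The triple $(\Phi,(x,\lambda))$ is then an object of the Grothendieck construction, and conversely $(\Phi,(x,\lambda))$ reconstructs $\Psi$ by declaring $\Psi_{(u,\id_u)}=\const_x$ and letting $\Psi_w$ be $\Phi_w$ on $(u\unders I)/_w$, the object $\alpha_\ast x$ on the adjoined initial object, and $\lambda$ on the adjoined morphisms; the cone condition guarantees functoriality and naturality.

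For morphisms, a map $\Psi\to\Psi'$ on the left is a morphism in the relevant $\Hom$-category, which I would likewise decompose by restriction: its component at $(u,\id_u)$ is a morphism $\xi\colon x\to x'$ in $X_u$, its components over $u\unders I$ give a morphism $\mu\colon\Phi\to\Phi'$, and the remaining cone-point components are determined as $\alpha_\ast\xi$ by naturality. The compatibility of these pieces translates precisely into the condition $\lambda'\circ m_u(\xi)=\mu\circ\lambda$ defining a morphism $(\mu,\xi)$ in $\Hom\big((u\unders I)/_{(-)},X_{u<}\big)\mbox{$\int$}(m_u)/_{(-)}$. I would finish by checking that this assignment on objects and morphisms is a functor, is inverse to the reconstruction above, and is natural in $X$ (and, after reassembling the product over $U$, in $U$). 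The \emph{main obstacle} is bookkeeping rather than conceptual: one must track the morphism structure of the category of natural transformations carefully enough to see that the pointwise cones compatible in $w$ are the same data as a single morphism $m_u(x)\to\Phi$, and that the ``initial-cone'' decomposition of the over categories is compatible with all the naturality squares defining $\Psi$.
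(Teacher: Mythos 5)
Your proof is correct and constructs exactly the same isomorphism as the paper: an object of the left-hand side is split into its value at the identity factorization, its restriction to the non-identity part, and the connecting cone $\gamma\colon m_u(x)\to\Phi$, and morphisms are decomposed in the corresponding way. The only difference is organizational --- you first split off the product over $U$ and phrase the key point as ``$(u/I)/_w$ freely adjoins an initial object to $(u\unders I)/_w$,'' whereas the paper performs the equivalent case analysis (on whether $u\to k$ is an identity) directly for all of $U$ at once; both hinge on the same degree argument showing the adjoined object receives no incoming non-identity morphisms.
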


\begin{proof}
An object in the Grothendieck construction is a collection of functors $\{\Phi_{\alpha}\colon (U\unders I)/_{\alpha}\to X_{i}\}_{\alpha}$ natural in $\alpha\colon u\to i$ ranging over the objects of $U\unders i$, together with  objects $x_u\in X_u$ for every $u\in U$, and compatible natural transformations for every $\alpha\colon u\to i$
\[\gamma_{\alpha}\colon \alpha_\ast x_u\longrightarrow \Phi_\alpha\]
 Here $\alpha_{\ast}x_u\colon (U\unders I)/_{\alpha}\to X_{i}$ is the constant functor with value $\alpha_{\ast}x_u$. Given such an object $(\Phi,\underline{x},\underline{\gamma})$, define a natural transformation $\Psi\colon (U\!\leq\! I)/_{(-)}\to X_{U\leq}$ as follows.

An object of $(U\!\leq\! I)/_{\alpha}$ is a factorization $\vcenter{\hbox{\xymatrix@=5pt{u\ar[dr]\ar[rr]^{\alpha}&&i\\
&k\ar[ur]}}}$, and an object of $(U\unders I)/_{\alpha}$ is a similar factorization where the map $u\to k$ is not an identity. The functor $\Psi_\alpha\colon (U\!\leq\! I)/_{\alpha}\to X_{i}$ is defined on objects by
\[\Psi_\alpha(\vcenter{\hbox{\xymatrix@=5pt{u\ar[dr]
\ar[rr]^{\alpha}&&i\\
&k\ar[ur]}}})=\left\{\begin{array}{ll}\alpha_{\ast}x_u&\ , \mbox{if } (u\to k)=\id_u\\
\Phi_\alpha(\vcenter{\hbox{\xymatrix@=5pt{u\ar[dr]
\ar[rr]^{\alpha}&&i\\
&k\ar[ur]}}})&\ , \mbox{if } (u\to k)\neq \id_u
\end{array}\right.\]
The point here is that $\Phi_\alpha(\vcenter{\hbox{\xymatrix@=5pt{u\ar[dr]
\ar[rr]^{\alpha}&&i\\
&k\ar[ur]}}})$ is defined precisely when $u\to k$ is not an identity.
\vspace{-.4cm}
A morphism $\vcenter{\hbox{\xymatrix@=5pt{u\ar[dr]
\ar[rr]^{\alpha}&&i\\
&k\ar[ur]}}}\to \vcenter{\hbox{\xymatrix@=5pt{u\ar[dr]
\ar[rr]^{\alpha}&&i\\
&l\ar[ur]}}}$ in $(U\!\leq\! I)/_\alpha$ is a map $k\to l$ such that the two relevant triangles commute. Such a morphism is sent to
\[\Psi_\alpha\Big(\!\!\vcenter{\hbox{
\xymatrix@C=5pt@R=8pt{u\ar[ddr]\ar[dr]
\ar[rr]^{\alpha}&&i\\
&k\ar[d]\ar[ur]\\
&l\ar[uur]}}}\Big)=\left\{\begin{array}{ll}
\id_{\alpha_{\ast}x_u}&, \mbox{if } (u\to l)=\id_u
\\
\gamma_{\alpha}\colon \alpha_\ast x_u\rightarrow \Phi_\alpha(\vcenter{\hbox{\xymatrix@=5pt{u\ar[dr]
\ar[rr]^{\alpha}&&i\\
&l\ar[ur]}}})&, \mbox{if } (u\to l)\neq \id_u  ,\ (u\to k)= \id_u
\\
\Phi_\alpha\big(\vcenter{\hbox{\xymatrix@=8pt{k\ar[d]\\ l}}}\big)\colon \Phi_\alpha\big(\vcenter{\hbox{\xymatrix@=5pt{u\ar[dr]
\ar[rr]^{\alpha}&&i\\
&k\ar[ur]}}}\big)\to\Phi_\alpha\big(\vcenter{\hbox{\xymatrix@=6pt{u\ar[dr]
\ar[rr]^{\alpha}&&i\\
&l\ar[ur]}}}\big)
&, \mbox{if } (u\to l)\neq \id_u ,\ (u\to k)\neq \id_u
\end{array}\right.\]
Notice that if $u\to l$ is the identity map on $u$, by degree reasons both $u\to k$ and $k\to l$ must be identities.
This procedure defines a functor 
\[\Big(\Hom\big((U\unders I)/_{(-)},X_{U<}\big)\mbox{$\int$}F_U\Big)\longrightarrow \Hom\big((U\!\leq\! I)/_{(-)},X_{U\leq}\big)\]
on objects. Extend this on morphisms as follows. Unraveling the definitions of the Grothendieck construction and of the natural transformations category, we see that a morphism $(\Phi,\underline{x},\underline{\gamma})\to (\Phi',\underline{x}',\underline{\gamma}')$ in the left-hand category is a collection of compatible natural transformations $\lambda_{\alpha}\colon \Phi_\alpha\to \Phi_{\alpha}'$, for every non-identity map $\alpha\colon u\to i$ with $u\in U$, together with morphisms $f_u\colon x_u\to x_u'$ in $X_u$ for every $u\in U$, making the squares
\[\xymatrix{\alpha_\ast x_u\ar[r]^{\alpha_\ast f_u}\ar[d]_{\gamma_{\alpha}}&\alpha_\ast x'_u\ar[d]^{\gamma'_{\alpha}}\\
\Phi_\alpha\ar[r]_{\lambda_{\alpha}}&\Phi'_{\alpha}
}\]
commute. Such a pair $(\lambda,\underline{f})$ induces a morphism $\Psi\to \Psi'$ between the associated natural transformations in $\Hom\big((U\!\leq\! I)/_{(-)},X_{U\leq}\big)$, defined at a non-identity morphism $\alpha\colon u\to i$ by
\[\Psi_\alpha=\Phi_\alpha\stackrel{\lambda_\alpha}{\longrightarrow}\Phi'_\alpha=\Psi'_\alpha\]
and at an identity map $\id_u$ by $f_u\colon \Psi_{\id_u}=\alpha_\ast x_u\to\alpha_\ast x'_u=\Psi'_{\id_u}$.
The resulting functor is an isomorphism of categories. Its inverse sends a natural transformation $\{\Psi_\alpha\colon (U\!\leq\! I)/_{\alpha}\to X_i\}_{\alpha\colon u\to i}$ to the triple $(\Phi,\underline{x},\underline{\gamma})$ consisting of the restrictions $\Phi_\alpha\colon (U\unders I)/_{\alpha}\to (U\!\leq\! I)/_\alpha\stackrel{\Psi_\alpha}{\to}X_i$ for each $(\alpha\colon u\to i)\in U\unders I$, the objects $x_u=(\Psi_u\colon\ast=(U\!\leq\! I)/_{\id_u}\to X_u)$, and the natural transformations $\gamma_{\alpha}$ defined at an object $\vcenter{\hbox{\xymatrix@=5pt{u\ar[dr]\ar[rr]^{\alpha}&&i\\
&k\ar[ur]}}}$ of $(U\!\leq\! I)/_\alpha$ by the morphism in $X_i$
 \[\alpha_\ast x_u=\alpha_\ast\Psi_{\id_u}(\vcenter{\hbox{\xymatrix@=4pt{u\ar@{=}[dr]\ar@{=}[rr]&&u\\
&u\ar@{=}[ur]}}})
=\Psi_\alpha(\vcenter{\hbox{\xymatrix@=5pt{u\ar@{=}[dr]\ar[rr]^{\alpha}&&i\\
&u\ar[ur]_{\alpha}}}})
\stackrel{}{\longrightarrow}
\Psi_\alpha(\vcenter{\hbox{\xymatrix@=5pt{u\ar[dr]\ar[rr]^{\alpha}&&i\\
&k\ar[ur]}}})=\Phi_\alpha(\vcenter{\hbox{\xymatrix@=5pt{u\ar[dr]\ar[rr]^{\alpha}&&i\\
&k\ar[ur]}}})\]
Here the second equality holds by naturality of $\Psi$, and the arrow is $\Psi_{\alpha}$ applied to the morphism $\vcenter{\hbox{\xymatrix@C=9pt@R=7pt{u\ar[ddr]\ar@{=}[dr]
\ar[rr]_{\alpha}&&i\\
&u\ar[d]\ar[ur]\\
&k\ar[uur]}}}$
of $(U\!\leq\! I)/_{\alpha}$ induced by the factorization $\vcenter{\hbox{\xymatrix@=5pt{u\ar[dr]\ar[rr]^{\alpha}&&i\\
&k\ar[ur]}}}$. The inverse can be extended similarly to morphisms.
\end{proof}

\begin{proof}[Proof of Theorem $B^I$]
Let $NX\to FNX$ be a pointwise fibrant replacement of $NX$ in the projective model structure.
We prove just below, by induction on $n$, that for every subset $U\subset I_n$ the map
\[N\Hom\big((U\!\leq\! I)/_{(-)},X_{U\leq}\big)\cong \Hom\big(N(U\!\leq\! I)/_{(-)},NX_{U\leq}\big)\to\holim_{U\leq I}(FNX)_{U\leq}\]
is a weak equivalence. In particular by choosing $U=I_n$ the category $I_n/I$ is $I_{\leq n}$, and the  equivalence above is $N\Hom\big((I_{\leq n})/_{(-)},X_{\leq n}\big)\to \displaystyle\holim_{I_{\leq n}} NX_{\leq n}$. If $NI$ is finite dimensional $I=I_{\leq d}$ for some integer $d$, and therefore $N\Hom(I/_{(-)},X)\to \displaystyle\holim_{I} NX$ is an equivalence. When $I$ is infinite, taking the homotopy limit over the maps induced by the filtration induces an equivalence
\[\holim_{n\in \mathbb{N}^{op}}N\Hom\big((I_{\leq n})/_{(-)},X_{\leq n}\big)\stackrel{\simeq}{\longrightarrow}\holim_{n\in \mathbb{N}^{op}}\holim_{I_{\leq n}} NX_{\leq n}\]
The structure maps $\holim_{I_{\leq n}} NX_{\leq n}\to \holim_{I_{\leq n-1}} NX_{\leq n-1}$ in the right-hand tower are Kan fibrations. Indeed, they are induced by mapping the cofibrations of diagrams of simplicial sets $\iota_{n}/_{(-)}\to I_{\leq n}$, where $\iota_n\colon I_{\leq n-1}\to I_{\leq n}$ is the inclusion, into the fibrant diagram $FNX_{\leq n}$. Hence the right-hand homotopy limit is equivalent to the categorical limit. Now each $\Hom\big(N(I_{\leq n})/_{(-)},FNX_{\leq n}\big)$ is isomorphic to $\Hom\big(Nj_n/_{(-)},FNX\big)$, where $j_n\colon I_{\leq n}\to I$ is the inclusion. The right-hand limit is then
\[\lim_{n\in \mathbb{N}^{op}}\Hom\big(N(I_{\leq n})/_{(-)},FNX_{\leq n}\big)\cong \Hom\big(\colim_{n}Nj_n/_{(-)},FNX\big)\cong\holim_INX\]
The last isomorphism holds as the category $j_{n}/_{i}$ includes in $j_{n+1}/_{i}$ for every object $i$ of $I$, with union $\bigcup_{n\in\mathbb{N}}j_{n}/_{i}=I/_i$. This would finish the proof of Theorem $B^I$.

It remains to prove the inductive statement above.
The base induction step $n=0$, relies on the fact that for a subset $U\subset I_0$, the category $(U\!\leq\! I)$ is discrete, with objects the identity maps $\id_u$, for $u\in U$. Additionally the category
 $(U\!\leq\! I)/_{\id_u}=\{\id_u\}$ is the one point category. Therefore $\Hom\big((U\!\leq\! I)/_{(-)},X_{U\leq}\big)$ is the product category
\[\Hom\big((U\!\leq\! I)/_{(-)},X_{U\leq}\big)=\prod_{u\in U}X_u\]
and the homotopy limit of $NX_{U\leq}$ is the product
\[\holim NX_{U\leq}=\prod_{u\in U}FNX_u\]
Since the product of simplicial sets preserve all equivalences (not only between fibrant objects), the map $N\prod_{u\in U}X_u\to \prod_{u\in U}FNX_u$ is an equivalence.

Now suppose that $N\Hom\big((U\!\leq\! I)/_{(-)},X_{U\leq}\big)\to\holim NX_{U\leq}$ is an equivalence for every subset $U\subset I_n$, and let $V$ be a subset of $I_{n+1}$. Let $\Phi\colon (V\unders I)/_{(-)}\to X_{V<}$ be a natural transformation, and consider the commutative diagram
\[\xymatrix{
NF_{V}(\Phi)\ar[rr]\ar[d]&&\prod\limits_{v\in V}\hof_{\Phi|_{v<I}}\big( NX_v\to \holim NX_{v<}\big)\ar[d]\\
N \big(\Hom\int F_V\big)\ar[r]^-{\cong}\ar[dr]&N\Hom\big((V\!\leq\! I)/_{(-)},X_{V\leq}\big)\ar[d]\ar[r]&\holim NX_{V\leq}\ar[d]\\
&N\Hom\big((V\unders I)/_{(-)},X_{V<}\big)\ar[r]^-\simeq&\holim NX_{V<}
}\]
The bottom map is an equivalence by the inductive hypothesis, since $V\unders I=U\!\leq\! I$ for the set of objects $U:=(V\unders I)\cap I_{n}$. The isomorphism is from lemma \ref{indgrot}. By our assumption on the diagram $X$, the functor $F_V$ sends every morphism to a weak equivalence. Thus by Quillen's lemma \cite[p.98]{Quillen} and Thomason's theorem \cite{Thomason}, the left hand vertical sequence is a fiber sequence.  Let $\iota \colon V\unders I\to V\!\leq\! I$ be the inclusion. The map induced by $\iota$ on homotopy limits is the restriction
\[\holim_{V\!\leq\! I}NX_{V\leq}\longrightarrow \Hom\big(N\iota/_{(-)},FNX_{V\leq}\big)\cong \holim_{V\unders I} NX_{V<}\]
along the inclusion $\iota/_{(-)}\to (V\!\leq\! I)/_{(-)}$. Since this is a cofibration of diagrams of simplicial sets and $FNX_{V\leq}$ is fibrant, the restriction map is a Kan fibration.
Its point fiber over $\Phi$ is the product of total homotopy fibers
\[\prod_{v\in V}\hof_{\Phi|_{v<I}}\big( NX_v\to \holim NX_{v<}\big)\]
and therefore the right-hand vertical sequence in the diagram above is also a fiber sequence.
We are left with showing that the map on homotopy fibers
 \[NF_V(\Phi)=\prod_{v\in V}N\big( X_v\stackrel{m_v}{\to} \Hom\big((v\unders I)/_{(-)},X_{v<}\big)\big)/_{\Phi|_{v\unders I}}\longrightarrow \prod_{v\in V}\hof_{\Phi|_{v<I}}\big( NX_v\to \holim NX_{v<}\big)\]
is an equivalence. The components of this map factor as
\[\xymatrix{N\big(X_v\stackrel{m_v}{\to}\Hom\big((v\unders I)/_{(-)},X_{v<}\big)\big)/_{\Phi|_{v<I}}\ar[r]\ar[dr]&\hof_{\Phi|_{v<I}}\big( NX_v\to N\Hom\big((v\unders I)/_{(-)},X_{v<}\big)\big)\ar[d]\\
&\hof_{\Phi|_{v<I}}\big( NX_v\to \holim NX_{v<}\big)}\]
Our assumption on $X$ says that the functor $m_v/_{(-)}$ sends every map to a weak equivalence. Hence by Quillen's theorem B the top horizontal map in the triangle is an equivalence. The vertical map is also an equivalence, as by hypothesis of induction for the set $U:=(v\unders I)\cap I_{n}$ the map $N\Hom\big((v\unders I)/_{(-)},X_{v<}\big)\big)\to \holim NX_{v<}$ is an equivalence.
\end{proof}

\begin{cor*}
Let $X\colon\mathcal{P}(n+1)\to Cat$ be a Reedy quasi-fibrant cube. Then the total homotopy fibers of $NX$ are equivalent to the nerves of the over categories $m_{\emptyset}/_{\Phi}$, for natural transformations $\Phi\colon \mathcal{P}_0(-)\to X_{\emptyset <}$. In particular if the categories $m_{\emptyset}/_{\Phi}$ are contractible $NX$ is homotopy cartesian.
\end{cor*}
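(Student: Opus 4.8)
The plan is to read the statement off the last step of the proof of Theorem $B^I$, specialized to $I=\mathcal{P}(n+1)$. First I would record the degree bookkeeping for the cube: the under category $S/\mathcal{P}(n+1)$ is isomorphic to $\mathcal{P}(n+1-|S|)$, so $\deg(S)=(n+1)-|S|$, and the empty set is the unique object of top degree $n+1$. Hence $I_{n+1}=\{\emptyset\}$, while $\emptyset\unders I=\mathcal{P}_0(n+1)$ and $\emptyset\!\leq\! I\cong\mathcal{P}(n+1)=I$ because $\emptyset$ is initial. In this notation $X_{\emptyset<}$ is the restriction of $X$ to the punctured cube $\mathcal{P}_0(n+1)$, and $m_\emptyset$ is exactly the quasi-matching functor of the statement.

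Next I would recall the description of the total homotopy fiber of an $(n+1)$-cube as the homotopy fiber of the comparison map $NX_\emptyset\to\holim NX_{\emptyset<}$ from the initial vertex to the homotopy limit of the punctured cube $NX_{\emptyset<}$; the cube is homotopy cartesian precisely when this map is a weak equivalence. By Theorem $B^I$ applied to $\mathcal{P}_0(n+1)$ --- equivalently, by the inductive statement for the set $U=(\emptyset\unders I)\cap I_n$ --- there is an equivalence $N\Hom(\mathcal{P}_0(-),X_{\emptyset<})\xrightarrow{\simeq}\holim NX_{\emptyset<}$, so the vertices of the codomain are exactly the natural transformations $\Phi\colon\mathcal{P}_0(-)\to X_{\emptyset<}$, and the total homotopy fiber over such a $\Phi$ is $\hof_\Phi\big(NX_\emptyset\to\holim NX_{\emptyset<}\big)$.

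The crux is then to invoke the final paragraph of the proof of Theorem $B^I$ with $V=I_{n+1}=\{\emptyset\}$. Since $V$ is a singleton the functor $F_V$ reduces to $m_\emptyset/_{(-)}$, and that paragraph produces an equivalence
\[N\big(m_{\emptyset}/_{\Phi}\big)\xrightarrow{\ \simeq\ }\hof_\Phi\big(NX_\emptyset\to\holim NX_{\emptyset<}\big),\]
realized as the Quillen-$B$ equivalence $N(m_\emptyset/_\Phi)\simeq\hof_\Phi\big(NX_\emptyset\to N\Hom(\mathcal{P}_0(-),X_{\emptyset<})\big)$ followed by the equivalence on homotopy fibers induced by the previous paragraph. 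The right-hand side is the total homotopy fiber over $\Phi$, which is the first assertion. For the second assertion, if every $N(m_\emptyset/_\Phi)$ is contractible then every homotopy fiber of $NX_\emptyset\to\holim NX_{\emptyset<}$ is contractible, so the map is a weak equivalence and $NX$ is homotopy cartesian.

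The only delicate points, which I would check carefully, are the conventions for the total homotopy fiber of a cube (confirming it is the fiber of the initial vertex over the homotopy limit of the punctured cube, computed over each basepoint since this codomain need not be connected) and the fact that letting $\Phi$ range over all objects of $\Hom(\mathcal{P}_0(-),X_{\emptyset<})$ meets every path component of $\holim NX_{\emptyset<}$, so that contractibility of all the $N(m_\emptyset/_\Phi)$ genuinely forces the comparison map to be a weak equivalence rather than merely a fiberwise equivalence over some components.
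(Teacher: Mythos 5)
Your proposal is correct and follows essentially the same route as the paper: apply Theorem $B^I$ to the punctured cube $\mathcal{P}_0(n+1)$ to identify $\holim NX_{\emptyset<}$ with $N\Hom\big(\mathcal{P}_0(-),X_{\emptyset<}\big)$, then use Quillen's Theorem $B$ for $m_{\emptyset}$ (whose over categories control the fibers by the Reedy quasi-fibrancy hypothesis at the initial vertex) to identify the total homotopy fiber with $N\big(m_{\emptyset}/_{\Phi}\big)$. Your closing remarks on $\pi_0$-surjectivity are a reasonable extra check that the paper leaves implicit.
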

\begin{proof}
Let us recall that the total homotopy fiber of $NX$ over $\Phi$ is the homotopy fiber
\[\hofib_\Phi\big(NX_{\emptyset}\longrightarrow \holim_{\mathcal{P}_0(n+1)}NX_{\emptyset <}\big)\]
Clearly the restriction $X_{\emptyset <}$ of $X$ to $\mathcal{P}_0(n+1)$ is also Reedy quasi-fibrant, and by Theorem $B^I$ the total homotopy fiber is equivalent the homotopy fiber of
\[\hofib_\Phi\big(NX_{\emptyset}\stackrel{Nm_{\emptyset}}{\longrightarrow} N\Hom\big(\mathcal{P}_0(-),X_{\emptyset <}\big)\big)\]
Since $m_{\emptyset}/_{(-)}$ also sends all maps to equivalences, this is equivalent to $Nm_{\emptyset}/_{\Phi}$ by Quillen's Theorem $B$.
\end{proof}

\section{Higher homotopy pull-backs of categories}\label{higherpb}

We weaken the hypotheses of Theorem $B^I$ when $I$ is a punctured $(n\!+\!1)$-dimensional cube. It is going to be convenient to choose a basepoint in the set indexing our cube. We replace the set $\{1,\dots,n\!+\!1\}$ from the introduction with the set $n_+=\{1,\dots,n,+\}$. Let $\mathcal{P}_0(n_+)$ be the poset of non-empty subsets of $n_+$ ordered by inclusion. The idea of the alternative proof is to express the homotopy limit of a punctured cube as an iterated homotopy pullback, and use repeatedly theorem $B$ of \cite{ClarkKan}. We do this by restricting a punctured cube $X\colon \mathcal{P}_0(n_+)\to Cat$ along a functor
\[\lambda\colon \prod_{i=1}^n\mathcal{P}_0(\{i\}_+)\to\mathcal{P}_0(n_+)\] This functor is best described as the composite
\[\xymatrix{\prod\limits_{i=1}^n\mathcal{P}_0(\{i\}_+)\ar[rr]^-{\prod\limits_i(\{i\}_+\backslash (-))}&&\big(\prod\limits_{i=1}^n\mathcal{P}(\{i\}_+)\backslash\{i\}_+\big)^{op}\ar[r]^-{U^{op}}&(\mathcal{P}_0(n_+)\backslash n_+)^{op}\ar[rr]^-{n_+\backslash (-)}&&\mathcal{P}_0(n_+)}\]
where the first and last functors are the standard isomorphisms that take complements respectively inside the $\{i\}_+$'s and $n_+$. The dual functor $U$ sends a collection of proper subsets $\underline{V}=\{V_i\subset \{i\}_+\}_{i\in \underline{n}}$ to their union in $n_+$ (having a distinct basepoint is crucial for defining this functor). 

\begin{lemma}
The functor $\lambda$ is left cofinal, that is the over categories $\lambda/S$ are contractible for every proper subset $S$ of $n_+$.
\end{lemma}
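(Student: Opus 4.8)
The plan is to unwind the comma category $\lambda/S$ into an explicit full subposet of the product $\prod_{i=1}^n\mathcal{P}_0(\{i\}_+)$ and contract it by hand. Since everything in sight is a poset, a morphism $\lambda(\underline{V})\to S$ exists precisely when $\lambda(\underline{V})\subseteq S$, and the triangles defining morphisms in $\lambda/S$ commute automatically; hence $\lambda/S$ is the full subposet of $\prod_i\mathcal{P}_0(\{i\}_+)$ on those tuples $\underline{V}=(V_i)$ with $\lambda(\underline{V})\subseteq S$. The first step is to compute $\lambda$ explicitly. Writing $V_i$ for a non-empty subset of $\{i,+\}$ and chasing the three functors in the definition (complement inside each $\{i\}_+$, union, complement inside $n_+$) gives $\lambda(\underline{V})=\bigcap_i\big((\{1,\dots,n\}\setminus\{i\})\cup V_i\big)$. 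Reading this off coordinatewise, a non-basepoint element $j$ lies in $\lambda(\underline{V})$ iff $j\in V_j$, while $+\in\lambda(\underline{V})$ iff $+\in V_i$ for every $i$.

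With this formula the condition $\lambda(\underline{V})\subseteq S$ unpacks into: $V_j=\{+\}$ for every $j\in\{1,\dots,n\}\setminus S$, together with (only when $+\notin S$) the extra requirement that $V_i=\{i\}$ for at least one $i$. These two cases exhaust the proper $S$. If $+\in S$, only the first condition survives, so $\lambda/S$ is isomorphic to $\prod_{j\in S\cap\{1,\dots,n\}}\mathcal{P}_0(\{j\}_+)$, the remaining coordinates being frozen at $\{+\}$. Each factor has the terminal object $\{j,+\}$, so the product has a terminal object, and a poset with a terminal object has contractible nerve.

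The only real work is the case $+\notin S$, where $S\subseteq\{1,\dots,n\}$ is non-empty and $\lambda/S$ is isomorphic to the subposet $Q=\{\underline{W}\in\prod_{j\in S}\mathcal{P}_0(\{j\}_+)\ :\ W_i=\{i\}\text{ for some }i\in S\}$. Here $\lambda/S$ has neither a top nor a bottom element, so I would contract it using a monotone self-map. The key observation is that $g(\underline{W})_j=W_j\cup\{j\}$ defines an order-preserving functor $g\colon Q\to Q$: it is clearly monotone; it satisfies $\underline{W}\subseteq g(\underline{W})$, yielding a natural transformation $\id\Rightarrow g$ and hence $Ng\simeq\id_{NQ}$ on nerves; and since $g(\underline W)_i=\{i\}$ iff $W_i=\{i\}$, it does not destroy the singleton coordinate witnessing membership in $Q$, so $g$ indeed lands in $Q$. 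Its image lies in the sub-cube $Q'=\{\underline{W}\in Q : W_j\in\{\{j\},\{j,+\}\}\text{ for all }j\}$, on which $g$ restricts to the identity. Writing $\iota\colon Q'\hookrightarrow Q$ and the corestriction $\bar g\colon Q\to Q'$, we get $\iota\bar g=g\simeq\id_Q$ and $\bar g\iota=\id_{Q'}$, so $NQ\simeq NQ'$.

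Finally $Q'$ is the Boolean lattice $\prod_{j\in S}\big(\{j\}\subseteq\{j,+\}\big)$ with its top element removed, and its least element $(\{j\})_{j\in S}$ survives the removal; a poset with a least element is contractible, so $Q'$, and therefore $\lambda/S$, has contractible nerve. I expect the only delicate point to be the bookkeeping in the explicit computation of $\lambda$ and the check that $g$ preserves $Q$; once the formula for $\lambda$ is pinned down, the contraction $\underline{W}\mapsto\underline{W}\cup(\text{singletons})$ onto $Q'$ is the crux and the rest is routine.
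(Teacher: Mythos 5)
Your proof is correct, and it takes a genuinely different --- and in fact more robust --- route than the paper. The paper dualizes to the union functor $U$ and disposes of each under category $S/U$ in one step by exhibiting a claimed initial object $\underline{S}$; you instead compute $\lambda$ coordinatewise ($j\in\lambda(\underline V)$ iff $j\in V_j$, and $+\in\lambda(\underline V)$ iff $+\in V_i$ for all $i$), identify $\lambda/S$ as an explicit full subposet of $\prod_i\mathcal{P}_0(\{i\}_+)$, and contract it by the retraction $\id\Rightarrow g$ onto a punctured Boolean cube with a least element. Your extra step is not wasted effort: in the case you flag as the hard one ($+\notin S$, which corresponds to $+\in S$ in the paper's dual indexing), the relevant category has several incomparable minimal objects and hence no initial object at all --- for $n=2$ and $S=\{+\}$ the objects $(\{+\},\emptyset)$ and $(\emptyset,\{+\})$ of $S/U$ are both minimal, so the tuple $\underline S$ with $S_1=S_2=\{+\}$ that the paper writes down is not initial, and the paper's one-line argument does not go through as stated. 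The lemma is nevertheless true, and your two-step contraction (deformation onto $Q'$, then the least element of $Q'$) supplies a complete proof; your easy case $+\in S$, a product of posets with terminal objects, is the part the paper's argument does handle correctly.
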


\begin{proof}
We prove equivalently that the functor $U$ is right cofinal.
Given a proper subset $S\subset n_+$, define a collection of proper subsets $\underline{S}$ in $\prod_{i=1}^n\mathcal{P}_0(\{i\}_+)\backslash \{i\}_+$, with components
\[S_i=\left\{\begin{array}{cl} \{i\}&, \mbox{if}\ i\in S\\
\emptyset&, \mbox{if}\ i\notin S\ \mbox{and}\ +\notin S\\
\{+\}&, \mbox{if}\ i\notin S\ \mbox{and}\ +\in S
\end{array}\right.\]
We claim that this is a well defined object of the under category $S/U$. Every element $i\in\underline{n}$ that is in $S$ is clearly contained in $U(\underline{S})$. In case the basepoint $+$ is in $S$, there must be an $i\in\underline{n}$ that does not belong to $S$ or $S$ wouldn't be proper. For that element $i$ the component $S_i=\{+\}$ and therefore $+$ belongs to $U(\underline{S})$. It is easy to see that $\underline{S}$ is initial in $S/U$, and therefore $S/U$ is contractible.
\end{proof}

For every integer $0\leq k\leq n$, we define $\underline{k}=\{1,\dots k\}$, with the convention that $\underline{0}$ is the empty set. Translation by a non-empty subset $K$ of $\underline{n}\backslash \underline{k}$ induces a punctured $(k+1)$-cube $X_{K\cup (-)}\colon \mathcal{P}_0(k_+)\to Cat$. Since $K$ maps uniquely to every $K\cup S$, for every subset $S$ of $\underline{k}$, there is an associated functor 
\[c_{K}\colon X_K\to \Hom\big(\mathcal{P}_0(k_+)/_{(-)},X_{K\cup (-)}\big)\] that sends an object $x$ in $X_K$ to the natural transformation $c_K(x)\colon \mathcal{P}_0(-)\to X_{K\cup (-)}$ consisting of the constant functors with value $\iota_{\ast}(x)$, where $\iota\colon K\to K\cup S$ is the inclusion. The functor $c_K$ is the functor $m_{K}$ from theorem Theorem $B^I$, for the diagram obtained by restricting $X$ to the subcategory $\mathcal{P}_0(K\cup k_+)$ of $\mathcal{P}_{0}(n_+)$.

\begin{theoremBn} Let $X\colon \mathcal{P}_0(n_+)\to Cat$ be a punctured $(n+1)$-cube of categories.
Suppose that for every integer $0\leq k\leq n-1$ and every non-empty subset $K\subset \underline{n}\backslash \underline{k}$ the functor
\[c_K/_{(-)}\colon \Hom\big(\mathcal{P}_0(k_+)/_{(-)},X_{K\cup (-)}\big)\longrightarrow Cat\]
sends every morphism to a weak equivalence. Then the nerve of $\Hom\big(\mathcal{P}_0(-),X\big)$ is equivalent to the homotopy limit of the nerve of $X$.
\end{theoremBn}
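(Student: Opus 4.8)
The plan is to follow the route announced just before the statement: use the left cofinality of $\lambda$ to replace the index category $\mathcal{P}_0(n_+)$ by the product $P=\prod_{i=1}^n\mathcal{P}_0(\{i\}_+)$, and then compute the resulting homotopy limit one pullback-shaped factor at a time, feeding each stage into Theorem $B$ of \cite{ClarkKan}. Since $\lambda$ is left cofinal the restriction map
\[\holim_{\mathcal{P}_0(n_+)}NX\stackrel{\simeq}{\longrightarrow}\holim_{P}N\lambda^{\ast}X\]
is a weak equivalence, and the same holds for every sub-punctured-cube $X_{K\cup(-)}\colon\mathcal{P}_0(k_+)\to Cat$ with its own cofinal functor $\lambda_k\colon\prod_{i=1}^k\mathcal{P}_0(\{i\}_+)\to\mathcal{P}_0(k_+)$, as the cofinality Lemma and its proof do not see the size of the cube. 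Each factor $\mathcal{P}_0(\{i\}_+)$ is the pullback shape $\bullet\to\bullet\leftarrow\bullet$, so a homotopy limit over $P$ is computed by iterating homotopy pullbacks.

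I would then argue by induction on $k$, proving for each $0\le k\le n$ and each $K\subseteq\underline{n}\setminus\underline{k}$ that the canonical comparison map
\[N\Hom\big(\mathcal{P}_0(k_+)/_{(-)},X_{K\cup(-)}\big)\longrightarrow\holim_{\mathcal{P}_0(k_+)}NX_{K\cup(-)}\]
is a weak equivalence; the theorem is the instance $k=n$, $K=\emptyset$. The base case $k=0$ is immediate, because $\mathcal{P}_0(0_+)=\{+\}$ is a point and both sides reduce to a fibrant replacement of $NX_{K\cup\{+\}}$. For the inductive step I would split off the factor $\mathcal{P}_0(\{k+1\}_+)$ from $\prod_{i=1}^{k+1}\mathcal{P}_0(\{i\}_+)$ and use the Fubini property of homotopy limits, so that $\holim_{\mathcal{P}_0((k+1)_+)}NX_{K\cup(-)}$ becomes a homotopy pullback over the cospan $\bullet\to\bullet\leftarrow\bullet$ of three homotopy limits of $k$-sub-cubes (one of which degenerates to a single vertex). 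By the cofinality reduction and the inductive hypothesis, each corner is modeled by a category of natural transformations $N\Hom\big(\mathcal{P}_0(k_+)/_{(-)},X_{K'\cup(-)}\big)$.

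It then remains to recognize this homotopy pullback as the nerve of the level-$(k+1)$ category $\Hom\big(\mathcal{P}_0((k+1)_+)/_{(-)},X_{K\cup(-)}\big)$. For this I would prove a categorical identification in the spirit of Lemma \ref{indgrot}, expressing the level-$(k+1)$ transformation category as a Barwick--Kan pullback $f\!\downarrow\! g$ of level-$k$ transformation categories; for $n=1$ this is exactly the isomorphism $\Hom\big((\bullet\to\bullet\leftarrow\bullet)/_{(-)},-\big)\cong f\!\downarrow\! g$ recalled in the introduction. The homotopical conclusion then follows from Theorem $B$ of \cite{ClarkKan}, which requires only \emph{one} of the two legs to be quasi-fibrant, the expected relevant leg being a matching functor of the form $c_{K\cup\{k+1\}}/_{(-)}$, whose quasi-fibrancy is among the assumed hypotheses. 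This asymmetry — one quasi-fibrant leg rather than two — is exactly what lets the proof get by with $2^{n+1}-n-2$ conditions instead of the $2^{n+1}-2$ of Theorem $B^I$.

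The step I expect to be the main obstacle is precisely this last bookkeeping. Because the basepoint $+$ is shared by all the factors $\mathcal{P}_0(\{i\}_+)$, splitting off a single factor entangles the basepoint with the remaining coordinates: one must pin down \emph{which} three $k$-sub-cubes appear as the corners of the cospan (the degenerate vertex corner is not literally $X_{K\cup\{k+1\}}$, so its comparison to a matching functor needs care), and then verify that the quasi-fibrant leg produced by Theorem $B$ is genuinely one of the assumed functors $c_{K'}$ rather than a composite for which no hypothesis is available. Establishing the categorical $f\!\downarrow\! g$ decomposition compatibly with these identifications, so that a single assumed quasi-fibrancy suffices at each stage, is where the real work lies; once it is in place the weak equivalence propagates formally up the induction to the instance $k=n$, $K=\emptyset$, yielding the claim.
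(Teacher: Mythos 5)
Your proposal follows essentially the same route as the paper's proof: left cofinality of $\lambda$, an induction that peels off one factor $\mathcal{P}_0(\{i\}_+)$ at a time, a Lemma \ref{indgrot}--style identification of the level-$(k+1)$ transformation category as a Barwick--Kan construction $f\!\downarrow\!g$ over level-$k$ ones, and their Theorem $B_2$ applied with the single quasi-fibrant leg $c_{K\cup\{k+1\}}$. The bookkeeping you flag as the main obstacle does close up exactly as you predict --- the three corners are $X_{K\cup(-)}$, $X_{(K\cup\{k+1\})\cup(-)}$, and a degenerate cube whose homotopy limit collapses to the vertex $X_{K\cup\{k+1\}}$ --- the only organizational difference being that the paper runs the induction over the functors $\lambda_k$ with arbitrary fixed outer coordinates $\underline{V}$, which forces an extra case analysis on whether the basepoint survives in $r_k(\underline{V})$, while your version over translation cubes avoids feeding the degenerate cubes back into the inductive hypothesis.
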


\begin{rem}
Theorem $B^n$ has one condition for every non-empty subset of $\underline{n}\backslash \underline{k}$, and every positive $k$ smaller than $n-1$. These are 
\[\sum_{k=0}^{n-1}(2^{n-k}-1)=2^{n+1}-n-2\] conditions, which is smaller than the number of conditions of Theorem $B^I$ for the category $I=\mathcal{P}_0(n_+)$, which is $2^{n+1}-2$. In particular for $n=1$, Theorem $B^n$ has a single condition requiring that the functor $c_{1}/_{(-)}$ sends morphisms to equivalences. This is the same as the condition of Theorem $B_2$in \cite{ClarkKan}.
\end{rem}

\begin{proof}[Proof of Theorem $B^n$]
As $\lambda$ is left cofinal, the induced map
\[\holim_{\mathcal{P}_0(\{n\}_+)}\dots \holim_{\mathcal{P}_0(\{1\}_+)} \lambda^\ast NX\cong\holim_{\prod_{i=1}^n\mathcal{P}_0(\{i\}_+)} \lambda^\ast NX{\longrightarrow} \holim_{\mathcal{P}_0(n_+)} NX\]
is a weak equivalence by the cofinality theorem for homotopy limits (see e.g. \cite[19.6.7]{Hirsch}). We show by inductively calculating the homotopy pullbacks that the left-hand side is equivalent to the nerve of $\Hom(\mathcal{P}_0(-),X)$. For every $1\leq k\leq n$, let $\lambda_k$ be the functor
\[\lambda_k\colon \mathcal{P}_0(k_+)\times\prod_{i=k+1}^n\mathcal{P}_0(\{i\}_+)
\longrightarrow \mathcal{P}_0(k_+)\times \mathcal{P}_0((\underline{n}\backslash\underline{k})_+)\longrightarrow \mathcal{P}_0(n_+)\]
The first map is the identity on $\mathcal{P}_0(k_+)$ crossed with the dual of the union functor, and the second map is again the dual of the union. In symbols 
\[\lambda_k(W,\{V_i\}_{i=k+1}^n)=n_+\backslash\big((k_+\backslash W)\cup (\bigcup_{i=k+1}^n(\{i\}_+\backslash V_i))\big)=\big((\underline{n}\backslash \underline{k})\cup W\big)\cap r_k(\underline{V})\]
where $r_k(\underline{V}):=n_+\backslash (\bigcup_{i=k+1}^n(\{i\}_+\backslash V_i))$. Notice that $r_k(\underline{V})$  always contains $\underline{k}$ as a proper subset.
For $k=1$ we recover our original functor $\lambda_1=\lambda$, and for $n=k$ the functor $\lambda_n$ is the identity of $\mathcal{P}_0(n_+)$.
We show by induction on $k$ that for every collection of subsets $\underline{V}=\{\emptyset \neq V_i\subset \{i\}_+\}_{i=k+1}^n$ the canonical map
\[N\Hom\big(\mathcal{P}_0(k_+)/_{(-)},X_{\lambda_k(-,\underline{V})}\big)\longrightarrow \holim_{\mathcal{P}_0(\{k\}_+)}\dots \holim_{\mathcal{P}_0(\{1\}_+)}  NX_{\lambda(-,\underline{V})}\]
is an equivalence. When $k=n$ this proves the theorem.

For $k=1$, we need to show that for every $\underline{V}=\{V_i\}_{i=2}^n$ the nerve of $\Hom\big(\mathcal{P}_0(1_+)/_{(-)},X_{\lambda(-,\underline{V})}\big)$ is equivalent to
\[\holim\big( NX_{\lambda(\{+\},\underline{V})}\stackrel{f_+}{\longrightarrow} NX_{\lambda(\{1\}_+,\underline{V})}\stackrel{f_1}{\longleftarrow} NX_{\lambda(\{1\},\underline{V})} \big)\]
This is the case by Quillen's theorem $B_2$ of \cite{ClarkKan} if $f_1/_{(-)}$ sends every morphism in $X_{\lambda(\{1\}_+,\underline{V})}$ to a weak equivalence. In the category $\mathcal{P}_0(n_+)$, the zig-zag $\lambda(\{+\},\underline{V})\to\lambda(\{1\}_+,\underline{V})\leftarrow \lambda(\{1\},\underline{V})$ is
\[(\underline{n}\backslash\underline{1})_+\cap r_1(\underline{V})\to r_1(\underline{V})\leftarrow \underline{n}\cap r_1(\underline{V})\]
If $+$ does not belong to $r_1(\underline{V})$ the right-hand map is the identity, the functor $f_1$ is the identity functor, and $f_1/_{(-)}$ sends morphisms to equivalences automatically. When $r_1(\underline{V})$ contains $+$, the right-hand map is the inclusion $r_1(\underline{V})\backslash +\to r_1(\underline{V})$. This is of the form $K\to K\cup +$ for the non-empty subset $K:=r_1(\underline{V})\backslash +\subset\underline{n}$ (it contains at least $1$). Hence $(f_1)/_{(-)}=(c_K)/_{(-)}$ sends morphisms to equivalences by assumption.

Now suppose that 
\[N\Hom\big(\mathcal{P}_0(k_+)/_{(-)},X_{\lambda_k(-,\underline{V})}\big)\to \holim_{\mathcal{P}_0(\{k\}_+)}\dots \holim_{\mathcal{P}_0(\{1\}_+)}  NX_{\lambda(-,\underline{V})}\]
is an equivalence for every $\underline{V}=\{V_i\}_{i=k+1}^n$. We need to show that for every collection $\underline{U}=\{U_i\}_{i=k+2}^n$ the nerve of $\Hom\big(\mathcal{P}_0((k+1)_+)/_{(-)},X_{\lambda_{k+1}(-,\underline{U})}\big)$ is equivalent to
\[\holim\Big( \holim\limits_{\prod_{i=1}^k\mathcal{P}_0(\{i\}_+)}NX_{\lambda(-,+,\underline{U})}\to \holim\limits_{\prod_{i=1}^k\mathcal{P}_0(\{i\}_+)}NX_{\lambda(-,\{k+1\}_+,\underline{U})}\leftarrow \holim\limits_{\prod_{i=1}^k\mathcal{P}_0(\{i\}_+)}NX_{\lambda(-,k+1,\underline{U})} \Big)\]
For every non-empty subset $W$ of $k_+$, the zig-zag $\lambda_k(W,+,\underline{U})\to\lambda_k(W,\{k+1\}_+,\underline{U})\leftarrow \lambda_k(W,k+1,\underline{U})$ in the category $\mathcal{P}_0(n_+)$ inducing the pullback diagram above is
\[\big((\underline{n}\backslash \underline{k+1})\cup W\big)\cap r_{k+1}(\underline{U})\to \big((\underline{n}\backslash \underline{k})\cup W\big)\cap r_{k+1}(\underline{U})\leftarrow \big((\underline{n}\backslash \underline{k})\cup (W\cap \underline{n})\big)\cap r_{k+1}(\underline{U})\]
The right-hand term is invariant under adding the basepoint $+$ to $W$. Hence all the maps $W\to W_+$ in the punctured $k_+$-cube $NX_{\lambda_k(-,k+1,\underline{U})}$ are identities. The right hand homotopy limit is then naturally equivalent to the $+$-vertex, that is
\[NX_{\lambda_{k}(+,k+1,\underline{U})}\stackrel{\simeq}{\longrightarrow} \holim\limits_{\prod_{i=1}^k\mathcal{P}_0(\{i\}_+)}NX_{\lambda(-,k+1,\underline{U})}\]
Together with the induction hypothesis, this shows that the homotopy pullback above is equivalent to
\[\holim\left(\vcenter{\hbox{\xymatrix{ N\Hom\big(\mathcal{P}_0(k_+)/_{(-)},X_{\lambda_k(-,+,\underline{U})}\big)\ar[d]^{f_+}\\ N\Hom\big(\mathcal{P}_0(k_+)/_{(-)},X_{\lambda_k(-,\{k+1\}_+,\underline{U})}\big)\\ NX_{\lambda_{k}(+,k+1,\underline{U})}\ar[u]_{f_{k+1}}}}}\right)\]
An argument completely analogous to Lemma \ref{indgrot} gives a natural isomorphism of categories 
\[\Big(\Hom\big(\mathcal{P}_0(k_+)/_{(-)},X_{\lambda_k(-,\{k+1\}_+,\underline{U})}\big)\mbox{$\int$} \big(f_+/_{(-)}\times f_{k+1}/_{(-)}\big)\Big)\cong \Hom\big(\mathcal{P}_0((k+1)_+)/_{(-)},X_{\lambda_{k+1}(-,\underline{U})}\big)\]
By Quillen's theorem $B_2$ of \cite{ClarkKan} this Grothendieck construction is equivalent to the homotopy pullback, if $(f_{k+1})/_{(-)}$ sends morphisms to equivalences.
If $+$ belongs to $r_{k+1}(\underline{U})$ the whole set $\underline{k+1}_+$ is contained in $r_{k+1}(\underline{U})$, and
\[\lambda_k(W,\{k+1\}_+,\underline{U})=\big((\underline{n}\backslash \underline{k})\cup W\big)\cap r_{k+1}(\underline{U})=\big((\underline{n}\backslash \underline{k})\cap r_{k+1}(\underline{U})\big)\cup W\]
Hence the punctured $k_+$-cube $X_{\lambda_k(-,\{k+1\}_+,\underline{U})}$ is the translation cube $X_{K\cup (-)}$, for the subset $K=(\underline{n}\backslash \underline{k})\cap r_{k+1}(\underline{U})$ of $\underline{n}\backslash \underline{k}$. Therefore $f_{k+1}/_{(-)}=c_K/_{(-)}$ sends morphisms to equivalences by assumption. If $+$ is not in $r_{k+1}(\underline{U})$ the maps $W\to W_+$ in the punctured cube $X_{\lambda_k(-,\{k+1\}_+,\underline{U})}$ are identities, and $f_{k+1}$ is the map from the $+$-vertex
\[f_{k+1}\colon X_{\lambda_{k}(+,k+1,\underline{U})}=X_{\lambda_{k}(+,\{k+1\}_+,\underline{U})}\stackrel{\simeq}{\longrightarrow}
\Hom\big(\mathcal{P}_0(k_+)/_{(-)},X_{\lambda_k(-,\{k+1\}_+,\underline{U})}\big)\]
This is an equivalence of categories by the cofinality theorem of \cite{Lydakis}, and therefore $f_{k+1}/_{(-)}$ sends morphisms to equivalences.
\end{proof}

\section{Equivariant Left-finite homotopy limits of categories}

In this section we prove Theorem $B^{I}_G$ stated in the introduction, for a finite group $G$. This is an equivariant version of Theorem $B^I$ for diagrams of categories that are equipped with a $G$-action. We make all the constructions of Theorem $B^{I}_G$ described in the introduction precise, by reviewing a few notions from \cite{Jack} and \cite{Gdiags}.

Let $G$ be a finite group acting on a category $I$. This is a functor $I\colon G\to Cat$, where the group $G$ is seen as a category with one object. By abuse of notation we will write $I$ also for the underlying category. We recall from \cite{Jack} that a $G$-diagram in an ambient category $\mathcal{C}$ is a diagram $X\colon I\to \mathcal{C}$ together with a natural transformation $g_X\colon X\to X\circ g$ for every group element $g\in G$, which satisfy $g_X\circ h_X=(gh)_X$ and $1_X=\id$. This is the same data as a diagram $X\colon G\int I\to\mathcal{C}$, with domain the Grothendieck construction of the $G$-action $I\colon G\to Cat$. Notice that each vertex $X_i$ has an action by the stabilizer group $G_i$ of the object $i$.

\begin{ex}
The diagram $I/_{(-)}\colon I\to Cat$ has a canonical structure of $G$-diagram, defined by the natural transformations $g_\ast\colon I/_{i}\to I/_{gi}$ induced by the automorphisms $g$ of $I$. Similarly the nerve of this diagram $NI/_{(-)}\colon I\to sSet$ is a $G$-diagram of simplicial sets.
\end{ex}

When $\mathcal{C}$ is the category of simplicial sets, the simplicial set of natural transformations $\Hom(K,Z)$ between two $G$-diagrams $K$ and $Z$ inherits a $G$-action by conjugation. An $n$-simplex of $\Hom(K,Z)$ is a natural transformation $f\colon K\times \Delta^n\rightarrow Z$. This is sent by the $G$-action to the composite
\[\xymatrix{gf\colon K\times \Delta^n\ar[rr]^-{g^{-1}_K\times\id}&&(K\circ g^{-1})\times \Delta^n=(K\times \Delta^n)\circ g^{-1}\ar[r]^-f&Z\circ  g^{-1}\ar[r]^-{g_Z}&Z}\]
In particular for the $G$-diagram $K=NI/_{(-)}$ this gives the Bousfield-Kan formula $\Hom(NI/_{(-)},Z)$ the structure of a simplicial $G$-set.

The category of $G$-diagrams in simplicial sets has a model structure where equivalences (respectively fibrations) are the natural transformations $f\colon K\to Z$ with the property that for every vertex $i\in I$ the map $f_i\colon K_i\to Z_i$ is an equivalence (respectively a fibration) of simplicial $G_i$-sets (see \cite[2.6]{Gdiags}).
 Define the homotopy limit of a $G$-diagram of simplicial sets $Z\colon I\to sSet$ to be the simplicial $G$-set
\[\holim Z:=\Hom(NI/_{(-)},FZ)\]
where $Z\stackrel{\simeq}{\to} FZ$ is a fibrant replacement of the $G$-diagram $Z$ in this model structure. This construction is interpreted in a suitable model categorical context in \cite{Gdiags}. In particular it sends weak equivalences of $G$-diagrams to weak $G$-equivalences of simplicial $G$-sets (see \cite[2.22]{Gdiags}).

Given a $G$-diagram $X\colon I\to Cat$, the category of natural transformations $\Hom(I/_{(-)},X)$ has a similar $G$-action by conjugation, and its nerve is isomorphic to $\Hom(NI/_{(-)},NX)$ as a simplicial $G$-set. Hence the fibrant replacement induces a $G$-map
\[N\Hom(I/_{(-)},X)\longrightarrow\holim_I NX\]
which is the map of the statement of Theorem $B_{G}^I$.

The proof of Theorem $B_{G}^I$ is based on the same inductive argument in the proof of theorem $B^I$. The key ingredient for the induction step is an equivariant analogue of lemma \ref{indgrot}. If $Y\colon I\to Cat$ is a $G$-diagram of categories, the Grothendieck construction $I\int Y$ has an induced $G$-action, defined on objects by
\[g\cdot \big(i\in I,c\in Y_i\big)=\big(gi, g_Y(c)\in Y_{gi}\big)\]
and sending a morphism $\big(\alpha\colon i\to j,\delta\colon \alpha_\ast c\to d\big)$ to
\[g\cdot \big(\alpha,\delta\big)=\big(g\alpha,(g\alpha)_\ast g_Y(c)= g_Y(\alpha_\ast c)\stackrel{g_Y(\delta)}{\longrightarrow}g_Y(d)\big)\]
Given a subset $U\subset I_n$, the $G$-action on $I$ induces a $G_U$-action on the categories $U\leq I$ and $U<I$, where $G_U$ is the subgroup of $g$ of elements that send $U$ to itself. The functor $F_U\colon \Hom\big((U\unders I)/_{(-)},X_{U<}\big)\to Cat$ from Lemma \ref{indgrot} that sends $\Phi\colon (U\unders I)/_{(-)}\to X_{U<}$ to 
\[F_U(\Phi)=\prod_{u\in U}(m_u)/_{(\Phi|_{u<I})}\]
has a canonical $G_U$-structure. It is defined by conjugating the $G_U$-action on $U$ indexing the product and the functors
\[\xymatrix{(m_u)/_{(\Phi|_{u<I})}\ar[r]\ar@{-->}[d]&X_u\ar[r]\ar[d]^{g}&\Hom\big((u\unders I)/_{(-)},X_{u<}\big)\ar[d]^g\\
(m_u)/_{(g\Phi|_{u<I})}\ar[r]&X_u\ar[r]&\Hom\big((u\unders I)/_{(-)},X_{u<}\big)
}\]
Hence the Grothendieck construction of $F_U$ inherits a $G_U$-action. The following is immediate.
\begin{lemma}
For every subset $U\subset I_n$, the isomorphism of categories
\[\Hom\big(U\!\leq\! I)/_{(-)},X_{U\leq}\big)\cong \Big(\Hom\big((U\unders I)/_{(-)},X_{U<}\big)\mbox{$\int$} F_U\Big)\]
of lemma \ref{indgrot} is $G_U$-equivariant.
\end{lemma}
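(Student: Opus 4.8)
The plan is to check that the explicit isomorphism produced in the proof of Lemma~\ref{indgrot} intertwines the two $G_U$-actions. Since both actions and the isomorphism itself are given by completely explicit formulas, no new idea is needed and the verification is a bookkeeping check; this is precisely what is meant by ``immediate''. It suffices to prove that the forward functor $\Big(\Hom\big((U\unders I)/_{(-)},X_{U<}\big)\mbox{$\int$}F_U\Big)\to \Hom\big((U\!\leq\! I)/_{(-)},X_{U\leq}\big)$ of Lemma~\ref{indgrot} is $G_U$-equivariant: being an isomorphism, its inverse is then automatically equivariant as well.

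First I would record the two actions concretely. On the left-hand category the $G_U$-action is conjugation: an element $g$ sends a natural transformation $\Psi=\{\Psi_\alpha\}$ to the transformation obtained by relabelling the source diagram $(U\!\leq\! I)/_{(-)}$ along the automorphism induced by $g^{-1}$ and postcomposing with the structure maps $g_X$ of the $G$-diagram $X$. On the right-hand Grothendieck construction the action does the same to the base component $\Phi$, while on the fibre $F_U(\Phi)=\prod_{u\in U}(m_u)/_{(\Phi|_{u<I})}$ it permutes the product factors by $u\mapsto gu$ and conjugates each over-category by the structure isomorphism $g_X$, exactly as in the square defining the $G_U$-structure on $F_U$. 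The crucial elementary observation is that $g$, being an automorphism of $I$ that preserves $U$, carries an index $\alpha\colon u\to i$ to $g\alpha\colon gu\to gi$ with $gu\in U$, and sends identity maps to identity maps and non-identity maps to non-identity maps.

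The hard part, such as it is, is the index bookkeeping. I would take an object $(\Phi,\underline{x},\underline{\gamma})$, form both $g\cdot\Psi$ (apply the forward functor, then conjugate) and the transformation $\Psi^{g}$ associated to $g\cdot(\Phi,\underline{x},\underline{\gamma})$, and compare their values on an arbitrary factorization of an arbitrary index. Because the forward functor is assembled from restriction along $(U\unders I)/_\alpha\hookrightarrow (U\!\leq\! I)/_\alpha$, from evaluation of $\Psi$ at the one-point categories $(U\!\leq\! I)/_{\id_u}$ (recovering the $x_u$), and from the structure morphisms $\gamma_\alpha$ — each of which manifestly commutes both with relabelling along $g$ and with the maps $g_X$ — the two transformations agree. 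Concretely the comparison splits into the identity / non-identity cases of Lemma~\ref{indgrot}, and this split is respected precisely because $g$ preserves identities: in the identity case both sides read off $(g\alpha)_\ast g_X(x_u)$, and in the non-identity case both read off $g_X$ applied to $\Phi_\alpha$. The check on morphisms $(\lambda,\underline{f})$ is entirely analogous, using that conjugation and the forward functor both act on $\lambda$ and on the $f_u$ through the same relabelling and the same structure maps.
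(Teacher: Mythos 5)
Your proposal is correct and matches the paper, which offers no written proof at all (it simply declares the lemma ``immediate'' after setting up the $G_U$-actions); your verification --- checking the forward functor intertwines the conjugation actions, noting the inverse is then automatically equivariant, and observing that $g$ preserves the identity/non-identity dichotomy so the case-split in the definition of $\Psi_\alpha$ is respected --- is exactly the bookkeeping the paper leaves to the reader.
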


\begin{proof}[Proof of Theorem $B_{G}^I$]

Suppose that $I$ is left finite. The automorphism $g$ of $I$ induces an isomorphism of categories $g\colon i/I\to gi/I$. It follows that the nerves $N(i/I)$ and $N(gi/I)$ have the same dimension, and that the degree function $\deg\colon Ob I\to\mathbb{N}$ is $G$-invariant. Hence the $G$-action restricts to the filtration
\[I_{\leq 0}\subset I_{\leq 1}\subset\dots\subset I_{\leq n}\subset\dots\subset I\]
and the $G$-structure on $X\colon I\to Cat$ restricts to a $G$-structure on $X_{\leq n}\colon I_{\leq n}\to Cat$.

Let $NX\stackrel{\simeq}{\to} FNX$ be a pointwise fibrant replacement of $NX$ in the model structure of $G$-diagrams of simplicial set described above. We prove by induction on $n$ that for every subset $U\subset I_n$ the map
\[N\Hom\big((U\!\leq\! I)/_{(-)},X_{U\leq}\big)\cong \Hom\big(N(U\!\leq\! I)/_{(-)},NX_{U\leq}\big)\to\holim_{U\leq I}(FNX)_{U\leq}\]
is a weak $G_U$-equivalence. Once this is established, the same argument in the proof of Theorem $B^I$ finishes the proof of $B_{G}^I$.

For $n=0$, the category $U\leq I$ is discrete and the map above is the map of indexed products
\[\prod\limits_{u\in U}NX_u\longrightarrow \prod\limits_{u\in U}FNX_u\]
The fixed points of this map by a subgroup $H\leq G_U$ is isomorphic to the map
\[\prod\limits_{[u]\in U/_H}NX_{u}^{H_{u}}\longrightarrow \prod\limits_{[u]\in U/_H}FNX_{u}^{H_{u}}\]
where for a choice of representative in each $H$-orbit of $U$, where $H_u$ is the stabilizer group of $u$ in $H$. But each map $NX_{u}^{H_{u}}\to FNX_{u}^{H_{u}}$ is an equivalence of simplicial sets by assumption, and the map above is an equivalence.

Now suppose that the claim is true for $n$, and let $V$ be a subset of $I_{n+1}$. The sequence
\[NF_{V}(\Phi)\longrightarrow N\big(\Hom\mbox{$\int$} F_V\big)\cong N\Hom\big((V\!\leq\! I)/_{(-)},X_{V\leq}\big)\longrightarrow
N\Hom\big((V\!<\! I)/_{(-)},X_{V<}\big)\]
induced by the restriction map is a fiber sequence of simplicial $G_V$-sets. This is because its restriction on fixed points of a subgroup $H\leq G_V$ is the sequence
\[NF_{V}(\Phi)^H\longrightarrow N\big(\Hom\mbox{$\int$} F_V\big)^ H\cong N \big(\Hom^H\mbox{$\int$} F^{H}_V\big)\longrightarrow
N\Hom\big((V\!<\! I)/_{(-)},X_{V<}\big)^H\]
where the functor $F^{H}_V\colon N\Hom\big((V\!<\! I)/_{(-)},X_{V<}\big)^H\to Cat$ sends an $H$-equivariant natural transformation $\Phi$ to
\[F^{H}_V(\Phi)=\Big(\prod_{v\in V}(m_v)/_{(\Phi|_{v<I})}\Big)^H\cong \prod\limits_{[v]\in V/_H}m^{H_v}_v/_{(\Phi|_{v<I})}\]
By assumption  $m^{H_v}_v/_{(-)}$ sends every morphism to a weak equivalence, and thus so does $F^{H}_V$. It follows by Lemma \cite[p.98]{Quillen} and \cite{Thomason} that $NF^{H}_V$ is indeed the homotopy fiber of the restriction map. The restriction map
\[\holim NX_{V\leq}\longrightarrow \holim NX_{V<}
\]
is a fibration of simplicial $G$-sets by an argument analogous to the one in the proof of Theorem $B^I$. Its fiber is the product of homotopy fibers $\prod\limits_{v\in V}\hof_{\Phi|_{v<I}}\big( NX_v\to \holim NX_{v<}\big)$. Therefore it remains to show that the map on homotopy fibers
\[NF_V(\Phi)\longrightarrow \prod\limits_{v\in V}\hof_{\Phi|_{v<I}}\big( NX_v\to \holim NX_{v<}\big)\]
is a $G_V$-equivalence. By taking fixed points, this is the case if for every $v\in V$ the map
\[Nm_v/_{(\Phi|_{v<I})}\longrightarrow\hof_{\Phi|_{v<I}}\big( NX_v\to \holim NX_{v<}\big)\]
is a $G_v$-equivalence. This map factors as
\[Nm_v/_{(\Phi|_{v<I})}\to\hof_{\Phi|_{v<I}}\Big( NX_v\to N\Hom\big((v\!<\!I)/_{(-)},X_{v<}\big)\Big)\to \hof_{\Phi|_{v<I}}\big( NX_v\to \holim NX_{v<}\big)\]
The first map is a $G_v$-equivalence, since $m^{H}_v/_{(-)}$ sends every morphism to a weak equivalence of categories for every subgroup $H$ of $G_v$. The second map is also a $G_v$ equivalence, as the map $N\Hom\big((v<I)/_{(-)},X_{v<}\big)\to\holim NX_{v<}$ is a $G_v$-equivalence by the inductive hypothesis.
\end{proof} 

\bibliographystyle{amsalpha}
\bibliography{GQuillen}

\end{document}